\documentclass[11pt]{amsart}
\usepackage{graphicx}
\usepackage{amsmath, amsfonts, amsthm}
\usepackage{shuffle}
\usepackage{color}
\numberwithin{equation}{section}

\newcommand{\comm}[1]{}
\newtheorem{theorem}{Theorem}
\newtheorem{definition}[theorem]{Definition}
\newtheorem{lemma}[theorem]{Lemma}
\newtheorem{question}[theorem]{Question}
\newtheorem{remark}[theorem]{Remark}
\newtheorem{proposition}[theorem]{Proposition}
\newtheorem{corollary}[theorem]{Corollary}
\newtheorem{example}[theorem]{Example}

\numberwithin{theorem}{section}
\newtheorem*{acknowledgement}{Acknowledgement}

%
\theoremstyle{remark}

\DeclareMathOperator{\SU}{SU}

\DeclareMathOperator{\ad}{ad}
\DeclareMathOperator{\Jac}{Jac}
\DeclareMathOperator{\vol}{vol}

\DeclareMathOperator{\tr}{tr}
\DeclareMathOperator{\rtr}{\overline{tr}}
\DeclareMathOperator{\diam}{diam}

\newcommand{\Lie}[1]{\mathfrak{\lowercase{#1}}}

\newcommand{\su}{\Lie{su}}

\newcommand{\R}{\mathbb{R}}
\newcommand{\fg}{\Lie{g}}
\newcommand{\cP}{\mathcal{P}}
\newcommand{\bS}{\mathbf{S}}
\newcommand{\bbS}{\mathbb{S}}
\newcommand{\cH}{\mathcal{H}}
\newcommand{\var}{\text{var}}

\newcommand{\cB}{\mathcal{B}}
\newcommand{\Geod}{\text{Geod}}
\newcommand{\GeodSel}{\text{GeodSel}}

\title{Average signature of geodesic paths in compact Lie groups}
\author{Chong Liu and Shi Wang}

\address{Institute of Mathematical Sciences,
	ShanghaiTech University, Pudong, Shanghai, China}
\email{liuchong@shanghaitech.edu.cn}
\email{wangshi@shanghaitech.edu.cn}

\date{\today}
\keywords{Average signature, trace spectrum, compact Lie group}
\subjclass[2020]{Primary 60L10; Secondary 22E15}

\begin{document}
\maketitle

\begin{abstract}
    For any compact  connected Lie group $G$, we introduce a novel notion of average signature $\mathbb A(G)$ valued in its tensor Lie algebra, by taking the average value of the signature of the unique length-minimizing geodesics between all pairs of generic points in $G$.  we prove that using the average signature together with the trace operation with respect to the given bi-invariant Riemannian metric on $G$, one can recover certain geometric quantities of $G$, including the dimension, the diameter, the volume and the scalar curvature.
\end{abstract}

\section{Introduction}
In a series of seminal papers (\cite{Chen54}, \cite{Chen58}, \cite{Chen73}), K.T. Chen introduced a notion called \textit{signature} (also called signature mapping, signature transform)  which maps smooth curves/paths in a $n$-dimensional differentiable manifold $M$ into the tensor algebra over $\R^n$. More precisely, given a (piecewise) smooth path $\gamma: [0,1] \to M$ and fix a family of smooth one-forms $\phi = \{\phi_1. \ldots, \phi_n\} $ on $M$, then its ($\phi-$) signature is the following formal tensor series
$$
\bS_\phi(\gamma) =  \sum_{k=0}^\infty\sum_{i_1,\ldots,i_k = 1}^n \int_{0<t_1<\ldots<t_k<1} \phi_{i_1}(d \gamma_{t_1}) \ldots \phi_{i_k}(d \gamma_{t_k}) e_{i_1} \otimes \ldots \otimes e_{i_k}
$$
of iterated path integrals, where $e_1, \ldots, e_n$ are the canonical basis of $\R^n$. In the linear case that $M = \R^n$, the classical choice of the one-forms are $\phi_i = dx^i$ for $i=1,\ldots, n$, and in this case the signature of $\R^n$-valued (piecewise) smooth path $\gamma = (\gamma^1, \ldots, \gamma^n)$ can be expressed as
$$
\bS(\gamma) =\sum_{k=0}^\infty\sum_{i_1,\ldots,i_k = 1}^n \int_{0<t_1<\ldots<t_k<1}  (\gamma^{i_1})^\prime(t_1) \ldots (\gamma^{i_k})^\prime(t_k) d t_1\ldots d t_k e_{i_1} \otimes \ldots \otimes e_{i_k}.
$$
As shown in \cite{Chen54} and \cite{Chen58}, the signature mapping is actually an \textit{faithful} group homomorphism (up to the so-called tree-like equivalence, which was later discovered by B. Hambly and T. Lyons in \cite{hambly2010uniqueness}) from the set of (piecewise) smooth paths $\cP$ (equipped with the concatenation of paths as the multiplication) into the completed tensor algebra $T((\R^n))$ (equipped with the tensor product). This result was further generalized to paths with bounded variation (\cite{hambly2010uniqueness}) and to rough paths with low regularity (\cite{BGLY16}). Hence, all information of an unparametrized path $\gamma$ can be encoded into its signature $\bS(\gamma)$, so that studying the tensor $\bS(\gamma)$ is equivalent to studying the path $\gamma$, and then one can take advantage of the rich algebraic structure of the tensor algebra to analyze the signature $\bS(\gamma)$ instead of working directly on the complicated path space $\cP$. For instance, thanks to the following shuffle identity for signature (see e.g. \cite{Freeliealgebras})
$$
\langle \ell_1, \bS(\gamma) \rangle \langle \ell_2, \bS(\gamma) \rangle = \langle \ell_1 \shuffle \ell_2, \bS(\gamma) \rangle 
$$
(for any tensor $\ell_1, \ell_2 \in T(\R^n)$), the Stone-Weierstrass theorem guarantees that the set of linear functionals composition with the signature mappings $\{\langle \ell, \bS(\cdot) \rangle: \ell \in T(\R^n)\}$ is dense in the space of continuous functions on the path space $\cP$, which is often called the universal approximation theorem for the signature, see e.g. \cite{Chevyrev2022signaturekernel} or \cite{LLN2013}.
As a consequence, the \textit{expected signature} can characterize finite Borel measures on the path space $\cP$: for any two  $\mu_1, \mu_2$ finite Borel measures on $\cP$,
$$
\mu_1 = \mu_2 \iff \int_{\gamma \in \cP} \bS(\gamma) d\mu_1(\gamma) = \int_{\gamma \in \cP} \bS(\gamma) d\mu_2(\gamma)
$$
provided both integrals are well defined and satisfy some additional analytical properties, see \cite{Chevyrev2016chf} and \cite{Chevyrev2022signaturekernel}. These nice properties make signature a feasible and powerful tool in many application areas such as statistics and machine learning \cite{kiraly2019kernels}, \cite{CK2016}, \cite{Chevyrev2022signaturekernel}, \cite{Lemercier2021}, \cite{lou2023pcf-gan}, \cite{Cass2021general}; mathematical finance and stochastic control \cite{Kalsi2020Optimal}, \cite{Bayer2021stopping}, stochastic analysis \cite{Cass2024}, \cite{Friz2022magnus} among others.

One of the most significant and challenging open problems regarding signature is the quantitative reconstruction of a path from its signature, which is called the \textit{signature inversion problem}. For instance, it was conjectured in \cite{hambly2010uniqueness} that for any path $\gamma$ in $\R^n$ with bounded variation, its length $L(\gamma)$ can be recovered by considering the following asymptotic formula in terms of the projective norm of the tails of its signature $\bS(\gamma)$:
\begin{equation}\label{eq: length conjecture}
   L(\gamma) = \lim_{k \to \infty} \bigg(k!\| \bS_k(\gamma) \|\bigg)^{\frac{1}{k}}. 
\end{equation}
where $\bS_k(\gamma) = \sum_{i_1,\ldots,i_k = 1}^n \int_{0<t_1<\ldots<t_k<1} d\gamma^{i_1}_{t_1} \ldots d\gamma^{i_k}_{t_k} e_{i_1} \otimes \ldots \otimes e_{i_k} \in (\R^n)^{\otimes k}$ is the $k$-th component of $\bS(\gamma)$.
This conjecture was partially answered for bounded variation paths with various additional assumptions in \cite{hambly2010uniqueness}, \cite{LyonsXu2015inversion}, \cite{Geng2022SLinversion}, \cite{Chang2018inversion} and \cite{Cass2023signature}. Other important contributions in this field include \cite{LyonsXu2018inversion} and \cite{Geng17}.

Very recently, another version of the signature inversion problem has been raised by \cite{Geng2024}: Instead of reconstructing a path from its signature, one wants to recover some geometric features of a given Riemannian manifold $M$ by using the ``average value'' of the signatures of a bunch of special paths which travel therein.  More explicitly, this general problem can be formulated as follows (see also \cite{Geng2024}):

\begin{question}\label{general question}
 Let $M$ be a given Riemannian manifold with a Riemannian metric $g$. For every pair of points $(x,y) \in M \times M$, a curve $\gamma_{x,y} \in \cP(M)$ joining $x$ and $y$ from the set $\cP(M)$ consisting of continuous paths taking values in $M$ is selected and denote its signature (induced by a specified family of one-forms) by $\bbS(\gamma_{x,y})$. If we know the ``average value'' of the signatures of all these paths, denoted as $\mathbb{A}(M)$ and called \textit{``average signature''}, can one explicitly recover some geometric properties (such as curvature properties) from $\mathbb{A}(M)$ ?
\end{question}

In the present paper, we study the above inversion problem by 
focusing on a particular case that $M = G$, where $G$ is a connected, compact Lie group, endowed with a bi-invariant Riemannian metric $\langle \cdot, \cdot \rangle_G$ and the Haar measure $\mu$ induced by the volume form of the metric. 
The main purpose of this paper is to solve the following questions: 

\begin{question}
What is the average signature of length-minimizing geodesic paths in $G$?
\end{question}

As we will show in section \ref{sec:average-signature}, the above question is well posed since given a generic pair of $g,h\in G$ \footnote{Here we mean that for any $g \in G$, $h$ lies outside the cut locus of $g$; see Section \ref{sec:average-signature} for the detailed explanations.}, there is a \emph{unique} length-minimizing geodesic $\gamma_{g,h}$ connecting $g$ and $h$. Thus we may define the average signature on $G$ (with respect to a specified Haar measure $\mu$ on $G$) by
\[\mathbb A(G)=\frac{1}{\mu(G)\times \mu(G)}\int_{G\times G} \mathbb S(\gamma_{g,h}) d\mu(g)d\mu(h).\]
Based on this Lie group invariant, we compute explicit examples (See Example \ref{ex:circle}, \ref{ex:3sphere}, and \ref{ex:torus}) and deduce certain properties (See Proposition \ref{prop:even-degree} and Theorem \ref{thm:product}). Our aim is to study the following question:

\begin{question}\label{question: recover geometric information}
    Can we recover any geometric information of $G$ (and its Haar measure $\mu$) from its average signature $\mathbb A(G)$ explicitly?
\end{question}

In fact, since the geodesics related to any bi-invariant metric on a compact Lie group are same and the Haar measure on a compact Lie group are unique up to a constant, it is not hard to see from the above definition that the average signature is invariant under the choice of the bi-invariant metric on $G$ (see Section 3), and consequently the average signature itself cannot fully recover the Riemannian geometric information of $G$. Hence, the answer to Question \ref{question: recover geometric information} seems to be negative unless $G$ is simple. However, if we are allowed to combine the average signature with the trace operation by taking the contractions on each tensors with respect to a given bi-invariant metric on $G$ (see Definition \ref{def:tr}, \ref{def:rtr}), 
then we can extract useful scalar information from the trace spectrum of average signature and prove the following main theorem(See Theorem \ref{thm:Lk-norm}, \ref{thm:metric-ball} and Corollary \ref{cor:diam}, \ref{cor:dim-vol-scalar}), which confirms that the additional information provided by taking the trace spectrum with respect to the Riemannian metric is enough for us to recover the geometry of $G$ from $\mathbb A (G)$.

\begin{theorem}\label{thm: main result}
     Let $G$ be a connected, compact Lie group with a bi-invariant Riemannian metric. Denote $\overline{\mu}$ the normalized Haar measure of $G$ such that $\overline{\mu}(G)=1$. Given the trace spectrum of the average signature $\tr(\mathbb A(G))\in \mathbb R^\infty$, then we can recover 
\begin{enumerate}
    \item[$\bullet$] the $L^k$-norm (with respect to $\overline {\mu}$) of the function $f(g)=d(e,g)^2$ for all $k\in \mathbb N^*$, where $d(\cdot,\cdot)$ denotes the Riemannian distance function on $G$,
     \item[$\bullet$] the diameter of $G$,
    \item[$\bullet$] the $\overline \mu$-measure $\overline {\mu}(B(R))$ of metric ball centered at any point $g \in G$  with radius $R$ for each $R\geq 0$,
    \item[$\bullet$] the dimension of $G$,
    \item[$\bullet$] the volume of $G$,
    \item[$\bullet$] the scalar curvature of $G$
\end{enumerate}
in an explicit manner.
\end{theorem}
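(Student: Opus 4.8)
\medskip
\noindent\textbf{Proof plan.} The plan is to reduce the trace spectrum of $\mathbb A(G)$ to the moments of the single function $f(g)=d(e,g)^2$ against the normalized Haar measure $\overline\mu$, and then to extract each listed quantity from this moment sequence.

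\emph{Step 1 (reduction to a tensor exponential).} Taking the signature with respect to the left-invariant coframe (the Maurer--Cartan form), $\mathbb S$ is invariant under left translations; since the length-minimizing geodesic satisfies $\gamma_{g,h}=g\cdot\gamma_{e,g^{-1}h}$, we get $\mathbb S(\gamma_{g,h})=\mathbb S(\gamma_{e,g^{-1}h})$, and left-invariance of $\mu$ collapses the defining double integral to $\mathbb A(G)=\frac{1}{\mu(G)}\int_G\mathbb S(\gamma_{e,h})\,d\mu(h)$. For a bi-invariant metric every geodesic through $e$ is a one-parameter subgroup $t\mapsto\exp(tX)$, along which the logarithmic derivative is the constant $X$; hence $\mathbb S(\gamma_{e,h})$ equals the signature of the straight segment from $0$ to $X$ in $\fg$, namely the tensor exponential $\sum_{k\ge 0}X^{\otimes k}/k!$, where $X=\log h$ is the unique minimal logarithm, $|X|=d(e,h)$, defined for $\mu$-a.e.\ $h$ (off the cut locus). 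This is the content of Section~\ref{sec:average-signature}.

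\emph{Step 2 (trace spectrum $=$ moments of $f$).} The metric-contraction trace of Definitions~\ref{def:tr}, \ref{def:rtr} vanishes in odd degree (cf.\ Proposition~\ref{prop:even-degree}) and sends $X^{\otimes 2m}$ to $|X|^{2m}$, so
\[
\tr(\mathbb A(G))_{2m}=\frac{1}{(2m)!}\int_G d(e,h)^{2m}\,d\overline\mu(h)=\frac{1}{(2m)!}\int_G f(g)^{m}\,d\overline\mu(g),\qquad \tr(\mathbb A(G))_{2m+1}=0.
\]
Hence every moment $\int_G f^{m}\,d\overline\mu=(2m)!\,\tr(\mathbb A(G))_{2m}$ is recovered, giving $\|f\|_{L^{m}(\overline\mu)}=\big((2m)!\,\tr(\mathbb A(G))_{2m}\big)^{1/m}$ (item~1); letting $m\to\infty$ recovers $\diam(G)=\lim_{m}\big((2m)!\,\tr(\mathbb A(G))_{2m}\big)^{1/(2m)}=\|f\|_{L^\infty(\overline\mu)}^{1/2}$ (item~2).

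\emph{Step 3 (ball volumes; dimension, volume, scalar curvature).} Since $G$ is compact, $f$ is bounded, so its law under $\overline\mu$ is a compactly supported probability measure on $\R$, hence determined by its moments (Hausdorff moment problem); this recovers $R\mapsto\overline\mu(\{g:d(e,g)\le R\})$, which by left-invariance equals $\overline\mu(B(g_0,R))$ for every center $g_0$ (item~3). Below the injectivity radius this is smooth in $R$ with the classical expansion
\[
\overline\mu(B(R))=\frac{\omega_n R^{n}}{\vol(G)}\Big(1-\frac{S}{6(n+2)}R^{2}+O(R^{4})\Big),\qquad \omega_n=\frac{\pi^{n/2}}{\Gamma(\tfrac n2+1)},
\]
with $n=\dim G$ and $S$ the (constant, by homogeneity) scalar curvature. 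Reading off the leading order gives $n=\lim_{R\to0}\log\overline\mu(B(R))/\log R$ (item~4); the next coefficient gives $\vol(G)=\omega_n/\lim_{R\to0}\big(\overline\mu(B(R))/R^{n}\big)$ (item~5); and the $R^{2}$-correction gives $S=-6(n+2)\lim_{R\to0}R^{-2}\big(\vol(G)\,\overline\mu(B(R))/(\omega_nR^{n})-1\big)$ (item~6).

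\emph{Main obstacle.} The crux is Step~3: transferring the purely algebraic data (the moments of $f$) into the metric ball-volume profile and then into curvature. Moment-determinacy itself is immediate because $f$ is bounded, but one must check that the reconstructed distribution function is accessed at enough points, and with enough regularity at $R=0$, to legitimately extract the second-order coefficient carrying the scalar curvature; this rests on the smoothness of the exponential map below the injectivity radius and the Taylor expansion of the Riemannian volume density, and is the only genuinely quantitative point. Everything else reduces to bookkeeping once the identification of $\mathbb S(\gamma_{e,h})$ with the tensor exponential of the minimal logarithm (Section~\ref{sec:average-signature}) is in place.
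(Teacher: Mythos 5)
Your proposal is correct and follows essentially the same route as the paper: reduce $\mathbb S(\gamma_{e,h})$ to the tensor exponential of the minimal logarithm, identify $(2m)!\tr(A_{2m})$ with the $m$-th moment of $f(g)=d(e,g)^2$ (Theorem~\ref{thm:Lk-norm}), pass to the law of $f$ to get the ball-volume profile (Theorem~\ref{thm:metric-ball}), and read off $n$, $\vol(G)$ and $S$ from the small-radius expansion (Corollary~\ref{cor:dim-vol-scalar}). The only cosmetic difference is at the recovery of $\overline\mu(B(R))$: you invoke moment-determinacy of the compactly supported law of $f$, whereas the paper makes the inversion explicit by approximating the indicator of $[0,R^2]$ with Bernstein polynomials of mollified indicators, which is what justifies the word ``explicit'' in the statement.
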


A bit more precisely, we can show that all above geometric properties of $G$ can be recovered by applying the trace operation on the components of the average signature $\mathbb{A}(G)$ with even degrees. For instance, the diameter of $G$ can be recovered by the following asymptotic formula:
\begin{equation}\label{eq: diam of G formula}
\diam(G) = \sup_{g,h \in G} d(g,h) = \lim_{k \to \infty} \bigg((2k)! \tr(A_{2k}(G)) \bigg)^{\frac{1}{2k}}
\end{equation}
where $A_{2k}(G) \in (\R^n)^{\otimes 2k}$ denotes the component of $\mathbb{A}(G)$ with degree $2k$. One can certainly compare the above formula with the famous ``length identity'' \eqref{eq: length conjecture} in the classical signature inversion problem which recovers the length of a single $\R^n$-valued path from the asymptotic behavior of its signature, and notice that a similar asymptotic behavior of the average signature $\mathbb{A}(G)$ on $G$, see \eqref{eq: diam of G formula}, determines the ``length'' of the Lie group $G$. A key ingredient in our above reconstructions is the fact that the length-minimizing geodesics in a Lie group $G$ with a bi-invariant metric can be transformed to straight lines in its Lie algebra/tangent space $\fg$ (under the exponential mapping on $G$), which in turn implies that the average signature of geodesics on the compact Lie group $G$ has the following simple form:
$$
\mathbb{A}(G) = \sum_{k=0}^\infty \frac{1}{\mu(G)}\int_G \frac{v(g)^{\otimes k}}{k!} d\mu(g)
$$
with $v(g) \in \fg$ is the unique vector in the Lie algebra $\fg$ of $G$ such that $\exp(v(g)) = g$, 
so that we can apply some techniques from classical signature theory (which was designed for paths in linear spaces) to study the (average) signature of $G$-valued paths: for instance, 
the recovery of the information of the Haar probability measure $\bar \mu = \frac{\mu}{\mu(G)}$ on $G$ now corresponds to the classical moment/expected signature problem of the pushforward measure of $\bar \mu$ under the function $g \mapsto (t \mapsto v(g)t)$. 

\medskip

\textbf{Related work:} The construction of the signature $\bbS(\gamma)$ of a smooth path $\gamma$ in $G$ used in the present paper is the same as the one defined in \cite{LeeSig2020}, namely $\bbS(\gamma)$ is obtained by computing the iterated integrals of a linear path $\bar \gamma$ via testing a fixed dual frame $\{\omega_1, \ldots, \omega_n\}$ on $G$ against the (derivatives of) path $\gamma$, which allows us to identify $\bbS(\gamma)$, the signature of a $G$-valued path $\gamma$, with $\bS(\bar \gamma)$, the signature of a $\fg$-valued path $\bar \gamma$ (where $\fg$ is the Lie algebra of $G$, in particular, it is a linear space by forgetting the attached Lie bracket), see \cite[Section 3.2]{LeeSig2020} and Section \ref{signature} below. However, we note that the main purpose of the work \cite{LeeSig2020} is to study the applications of signature of Lie group-valued discrete time series in machine learning, and has nothing to do with the reconstruction of any geometric feature of the underlying Lie group.

To our best knowledge, the only literature regarding the inversion problem \ref{general question} is a recent preprint 
\cite{Geng2024} by Geng, Ni and Wang. In their paper, they exploited the expected signature of the Brownian bridge between every pair of points and expected signature of the Brownian loops based on each point to reconstruct the Riemannian distance function, the Ricci curvature, and the second fundamental form on the underlying compact Riemannian manifold.  Compared with our approach which is quite ``deterministic'', the technique used in \cite{Geng2024} relies heavily on the stochastic analysis on manifolds like the Malliavin-Stroock heat kernel expansion and the stochastic differential equations on manifolds. Moreover, we actually obtain different results from \cite{Geng2024}: the results obtained in the present paper are more ``global'' (that is, we recover the dimension, volume of the metric balls of $G$, and so on) whilst the results in \cite{Geng2024} are more ``local'' (as they recover intrinsic and extrinsic  curvature properties). 

\textbf{Outlook:} Comparing with \cite{Geng2024}, it would be very interesting to combine both approaches together to recover more complicated geometric properties of the underlying manifolds, and we will leave it as a possible future exploration. For example, when $M$ is a compact Riemannian manifold, for any pair of points $x,y\in M$, we can first take the expected signature $\psi(t,x,y)$ as in \cite{Geng2024}, and then take the space average
\[\int_{M\times M}\psi(t,x,y)d(\mu\times\mu)\]
where $d\mu$ is the Riemannian volume form on $M$. It is mysterious to know whether this ``averaged expected signature'' will possibly recover any global geometric/topological invariants of $M$.

\textbf{Organizations of the paper:} In Section \ref{signature} we will introduce the notion of signature of Lie-group valued paths and discuss its preliminary properties. In Section \ref{sec:average-signature} the definition of the average signature against a given Haar measure will be given, and then we will provide explicit formulae for the average signature on some classical Lie groups such as $S^1$ and $\text{SU}(2)$. The main results will be recorded in Section \ref{sec:tr}, where we will show how to use only the scalar terms in the trace of the average signature with even degree to reconstruct geometric features of $G$ explicitly, and prove the main Theorem \ref{thm: main result} therein. Finally, in Section \ref{sec:product} we provide an explicit and simple formula which allows us to compute the average signature on the product of Lie groups, provided we know the counterpart on each one.


\begin{acknowledgement} We would like to thank Bin Gui for helpful discussions. We also thank the anonymous referees for helpful suggestions to improve the paper. This work is partially supported by the National Key R\&D Program of China (No. 2023YFA1010900).
\end{acknowledgement}

\section{Signature on Lie groups}\label{signature}
Throughout this paper, let $G$ be a connected, compact Lie group of dimension $n$, endowed with a bi-invariant Riemannian metric $\langle \cdot,\cdot \rangle_G$, whose volume form gives rise to a Haar measure denoted by $\mu$. The associated Riemannian distance on $G$ will be denoted by $d(\cdot,\cdot)$.

Let $\mathfrak g\cong T_e G$ (the tangent space at the identity element $e \in G$) be the corresponding Lie algebra. Since the Lie group $G$ is assumed to be $n$ dimensional, we know that its Lie algebra $\mathfrak g \cong \R^n$ is isomorphic to the $n$-dimensional Euclidean space $\R^n$ as vector spaces. Following the classical notation from the signature theory, we use $T((\mathfrak g))$ to denote the space of formal power series of tensors over $\fg$, i.e., $T((\fg)) = \prod_{k=0}^\infty \fg^{\otimes k}$, and we will call $T((\fg))$ the completed tensor algebra over $\fg$ (recall that the tensor algebra $T(\fg)$ over $\fg$ is the direct sum of tensor powers of $\fg$, i.e., $T(\fg) = \oplus_{k=0}^\infty \fg^{\otimes k}$). More precisely, for any basis $\{e_1, ..., e_n\}$ on $\mathfrak g$, a vector $\mathbf x \in T((\fg))$ can be expressed as
$$
\mathbf x = \sum_{k=0}^\infty\sum_{i_1,\ldots,i_k=1}^n \mathbf x_{i_1,\ldots,i_k} e_{i_1} \otimes \ldots \otimes e_{i_k}
$$
for components $\mathbf x_{i_1,\ldots,i_k} \in \R$. 

Now we fix a basis $\{e_1, ..., e_n\}$ for $\mathfrak g$, it naturally extends to a left invariant frame defined on the entire $G$, which for simplicity we still denote it by $\{e_1, ..., e_n\}$. We denote its unique dual frame on the cotangent bundle by $\{\omega_1,..., \omega_n\}$, that is, $\omega_i(e_j)=\delta_{ij}$ holds at every point in $G$. Clearly, $\{\omega_1,...\omega_n\}$ is a set of smooth differential $1$-form on $G$. Following the same manner in \cite{Chen58} and \cite{LeeSig2020}, we can thus naturally define the signature on the pathspace $\mathcal P(G)$, which consists of continuous and piecewise smooth $C^1$-smooth paths on $G$ defined on the unit interval $[0,1]$, as follows.

\begin{definition}\label{def: signature of Lie group valued path}
The \emph{signature} of $G$-valued (piecewise) $C^1$-smooth paths is a map
\[\mathbb S: \mathcal P(G)\rightarrow T((\mathfrak g))\]
which sends each $C^1$-curve $\gamma:[0,1]\rightarrow G$ to
\[\mathbb S(\gamma)=(S_0(\gamma), S_1(\gamma), \dots, S_k(\gamma),\dots)\]
such that
\[S_0(\gamma) = 1, \quad S_k(\gamma)=\sum_{1\leq i_1,\dots,i_k\leq n}S_{i_1 \cdots i_k}(\gamma)  e_{i_1}\otimes \cdots\otimes e_{i_k} \text{ for } k \ge 1,\]
and
\[S_{i_1 \cdots i_k}(\gamma)=\int_{0<u_1<\cdots<u_k<1}\omega_{i_1}(\gamma'(u_1))\cdots\omega_{i_k}(\gamma'(u_k))du_1\cdots du_k.\]
For convenience, we also write $\mathbb S(\gamma)$ in terms of the formal power series
\[\mathbb S(\gamma)=\sum_{k=0}^\infty S_k(\gamma).\]
\end{definition}

The next proposition shows that the above definition of signature of $G$-valued $C^1$-smooth paths does not rely on the particular choice of basis for the Lie algebra $\fg$.
\begin{proposition}
The map $\mathbb S$ is independent of the choice of the basis $\{e_1,\dots, e_n\}$ on $\mathfrak g$.
\end{proposition}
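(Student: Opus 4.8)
The plan is to show that if $\{e_1,\dots,e_n\}$ and $\{\tilde e_1,\dots,\tilde e_n\}$ are two bases of $\mathfrak g$, with change-of-basis matrix $A = (a_{ij}) \in \GL(n,\R)$ so that $\tilde e_i = \sum_j a_{ij} e_j$, then the two resulting signature maps agree as elements of $T((\mathfrak g))$. The first step is to record how the dual frames transform: if $\{\omega_i\}$ is dual to $\{e_i\}$ and $\{\tilde\omega_i\}$ is dual to $\{\tilde e_i\}$, then $\tilde\omega_i = \sum_j (A^{-1})_{ji}\,\omega_j$, which follows immediately from $\tilde\omega_i(\tilde e_k) = \delta_{ik}$. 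This is the only ``geometric'' input; once the transformation of the $1$-forms is pinned down, the rest is a bookkeeping computation inside the tensor algebra.

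Next I would plug this into the defining iterated integral. For a fixed $C^1$-path $\gamma$, the component $\tilde S_{i_1\cdots i_k}(\gamma)$ computed with $\{\tilde\omega_i\}$ equals
\[
\sum_{j_1,\dots,j_k} (A^{-1})_{j_1 i_1}\cdots (A^{-1})_{j_k i_k}\, S_{j_1\cdots j_k}(\gamma),
\]
by multilinearity of the integrand and linearity of the integral. The key point is then that the degree-$k$ tensor $\tilde S_k(\gamma) = \sum_{i_1,\dots,i_k}\tilde S_{i_1\cdots i_k}(\gamma)\,\tilde e_{i_1}\otimes\cdots\otimes\tilde e_{i_k}$, when one substitutes both the above expression for the components and $\tilde e_{i} = \sum_\ell a_{i\ell}e_\ell$, collapses: summing over each index $i_m$ contracts $(A^{-1})_{j_m i_m}$ against $a_{i_m \ell_m}$ to give $\delta_{j_m \ell_m}$, so that $\tilde S_k(\gamma) = \sum_{j_1,\dots,j_k} S_{j_1\cdots j_k}(\gamma)\, e_{j_1}\otimes\cdots\otimes e_{j_k} = S_k(\gamma)$. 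Since this holds for every $k\ge 0$ (the $k=0$ term being $1$ in both cases), the formal power series $\mathbb S(\gamma)$ is the same regardless of the chosen basis. I would phrase this contraction cleanly by viewing $S_k(\gamma)$ as an intrinsic element of $\mathfrak g^{\otimes k}$ defined by pairing against $\omega_{i_1}\otimes\cdots\otimes\omega_{i_k}$, which makes the invariance a tautology, but the explicit index computation is a perfectly acceptable alternative.

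The only mild subtlety — the ``main obstacle'', such as it is — is making sure the left-invariant extension of the frame is handled consistently: the identity $\tilde\omega_i = \sum_j(A^{-1})_{ji}\omega_j$ holds at $e$ by linear algebra and then propagates to all of $G$ because both sides are left-invariant $1$-forms agreeing at $e$; equivalently, the constant matrix $A$ intertwines the two left-invariant frames at every point. Once this is noted, there is no analytic difficulty whatsoever, since everything is linear in the $1$-forms and the path $\gamma$ is fixed. I would therefore keep the write-up short: transformation of dual frames, substitution into the iterated integral, and the tensor contraction that cancels $A$ with $A^{-1}$ degree by degree.
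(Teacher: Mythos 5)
Your proposal is correct and follows essentially the same route as the paper's proof: write the change-of-basis matrix, deduce the (constant, left-invariant) transformation of the dual frames, substitute into the iterated integrals, and contract $A^{-1}$ against $A$ degree by degree. The paper denotes the dual transformation coefficients by $b_{ij}$ and derives the relation $\sum_k b_{ik}a_{jk}=\delta_{ij}$ rather than writing $(A^{-1})_{ji}$ explicitly, but the computation is identical.
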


\begin{proof} The proof is similar to the case of the signature in $\mathbb R^n$. For completeness, we include it here. Suppose $\{\overline{e}_1,\dots,\overline{e}_n\}$ is another basis of $\mathfrak g$ such that
\[\overline{e}_i=\sum_{j=1}^n a_{ij}e_j,\]
where $\{a_{ij}\}$ are constants.
We denote the dual basis $\{\overline \omega_1,\dots,\overline \omega_n\}$. Since both $\{\omega_i\}, \{\overline \omega_i\}$ are left invariant, we may assume
\[\overline\omega_i=\sum_{j=1}^n b_{ij}\omega_j,\]
where $\{b_{ij}\}$ are constants.
Then using the duality relations $\delta_{ij}=\overline \omega_i(\overline e_j)=\omega_i(e_j)$, we have
\begin{equation}\label{eq:dual-basis}
    \delta_{ij}=\sum_{k=1}^n b_{ik}a_{jk}=\sum_{k=1}^n b_{ki}a_{kj}.
\end{equation}
Denote by $\overline{\mathbb S}(\gamma)=(\overline S_0(\gamma), \overline S_1(\gamma),\dots, \overline S_k(\gamma),\dots)$ the signature under the basis $\{\overline{e}_1,\dots,\overline{e}_n\}$, and the corresponding component on $\overline{e}_{i_1}\otimes\cdots\otimes \overline{e}_k$ by $\overline{S}_{i_1\cdots i_k}(\gamma)$. Then by definition,
\begin{align*}
    \overline S_{i_1\cdots i_k}(\gamma)&=\int_{0<u_1<\cdots<u_k<1}\overline\omega_{i_1}(\gamma'(u_1))\cdots\overline\omega_{i_k}(\gamma'(u_k))du_1\cdots du_k\\
    &=\sum_{1\leq j_1,\dots,j_k\leq n}b_{i_1j_1}\cdots b_{i_kj_k}\int_{0<u_1<\cdots<u_k<1}\omega_{j_1}(\gamma'(u_1))\cdots\omega_{j_k}(\gamma'(u_k))du_1\cdots du_k\\
    &=\sum_{1\leq j_1,\dots,j_k\leq n}b_{i_1j_1}\cdots b_{i_kj_k} S_{j_1\cdots j_k}(\gamma).
\end{align*}
Thus we have,
\begin{align*}
    \overline S_n(\gamma)&=\sum_{1\leq i_1\dots, i_k\leq n}\overline S_{i_1\dots i_k}(\gamma)\overline e_{i_1}\otimes\cdots \otimes \overline e_{i_k}\\
    &=\sum_{1\leq i_1\dots, i_k\leq n}\sum_{1\leq j_1,\dots,j_k\leq n}b_{i_1j_1}\cdots b_{i_kj_k} S_{j_1\cdots j_k}(\gamma)\overline e_{i_1}\otimes\cdots \otimes \overline e_{i_k}\\
    &=\sum_{1\leq i_1\dots, i_k\leq n}\sum_{1\leq j_1,\dots,j_k\leq n}\sum_{1\leq l_1\dots, l_k\leq n}b_{i_1j_1}\cdots b_{i_kj_k}a_{i_1l_1}\cdots a_{i_kl_k}S_{j_1\cdots j_k}(\gamma)e_{l_1}\otimes\cdots \otimes e_{l_k}\\
    &=\sum_{1\leq j_1,\dots,j_k\leq n}\sum_{1\leq l_1\dots, l_k\leq n}\delta_{j_1l_1}\cdots \delta_{j_kl_k}S_{j_1\cdots j_k}(\gamma)e_{l_1}\otimes\cdots \otimes e_{l_k}\\
    &=\sum_{1\leq j_1,\dots,j_k\leq n}S_{j_1\cdots j_k}(\gamma)e_{j_1}\otimes\cdots \otimes e_{j_k}\\
    &=S_n(\gamma),
\end{align*}
where the fourth equality uses equation \eqref{eq:dual-basis}. Therefore, the proposition holds.
\end{proof}

In fact, we can interpret the above definition of the signature of $G$-valued  (piecewise) $C^1$-smooth paths in an alternative way: Let $\gamma \in \cP(G)$, we can obtain a $\fg$-valued continuous path $f_\gamma: [0,1] \to \fg$ by defining
$$
f_\gamma (t) := (L_{\gamma(t)^{-1}})_*(\gamma^\prime(t)) := d(L_{\gamma(t)^{-1}})_{\gamma(t)} \gamma^\prime (t) \in T_e G \cong \fg,
$$
where $L_g$ denotes the left translation by $g \in G$, $g^{-1}$ denotes the inverse of $g \in G$, and $(L_g)_*: T_h G \to T_{gh} G$ denotes the push-forward operation. Note that this path $f_\gamma$ is the pullback of the Maurer-Cartan form along the path $\gamma$ (see \cite[Section 5.6]{DGLee}).

Now let $\bar \gamma \in \cP(\fg)$ be a $C^1$-smooth path valued in $\fg$ which is defined by
$$
\bar \gamma(t) = \int_0^t f_\gamma (s) ds, \quad t \in [0,1].
$$
In other words, $\bar \gamma$ is the unique curve in $\fg$ (starting from $0 \in \fg$) which satisfies $\bar \gamma^\prime (t) = f_\gamma(t)$ for all $t \in [0,1]$.
For a fixed basis $\{e_1,\ldots,e_n\}$ of $\fg$, we can express $\bar \gamma(t)$ as
$$
\bar \gamma(t) = \sum_{i=1}^n \bar \gamma^i(t) e_i,
$$
where the components $\bar \gamma^i$, $i=1,\ldots,n$ are $\R$-valued  $C^1$-smooth paths. Hence, we may view $\bar \gamma$ as an $\R^n$-valued $C^1$-smooth path. This allows us to apply the classical notion of signature for $C^1$-smooth $\R^n$-valued paths (see e.g. \cite{hambly2010uniqueness}) to define the signature of path $\bar \gamma$ 
$$
\bS(\bar \gamma) = \sum_{k=0}^\infty \bS_k(\bar \gamma),\quad \bS_k(\bar \gamma) = \sum_{i_1,\ldots,i_k=1}^n \bS_{i_1,\ldots,i_k}(\bar \gamma)e_{i_1} \otimes \ldots \otimes e_{i_k},
$$
where $\bS_{i_1,\ldots,i_k}(\bar \gamma)  =\int_{0<u_1<\cdots<u_k<1}(\bar \gamma^{i_1})^\prime (u_1)\cdots (\bar \gamma^{i_k})^\prime (u_k)du_1\cdots du_k$. \\
On the other hand, recalling that $\{\omega_1, \ldots, \omega_n\}$ are the left invariant dual frame associated with $\{e_1,\ldots,e_n\}$ which satisfies that 
$$
\omega_i (\gamma^\prime (t)) = e_i^* (d(L_{\gamma(t)^{-1}})_{\gamma(t)} \gamma^\prime (t) ) = e_i^*(f_\gamma(t)) = e_i^*(\bar \gamma^\prime(t)) = (\bar \gamma^i)^\prime(t)
$$
(where $\{e_1^*,\ldots,e^*_n\}$ denotes the dual basis of $\{e_1,\ldots,e_n\}$ in $T^*_eG$) for all $t \in [0,1]$ and all $i=1,\ldots,n$, we indeed have
\begin{align*}
   \bS_{i_1,\ldots,i_k}(\bar \gamma)  &=\int_{0<u_1<\cdots<u_k<1}(\bar \gamma^{i_1})^\prime (u_1)\cdots (\bar \gamma^{i_k})^\prime (u_k)du_1\cdots du_k \\
   &=\int_{0<u_1<\cdots<u_k<1}\omega_{i_1}(\gamma'(u_1))\cdots\omega_{i_k}(\gamma'(u_k))du_1\cdots du_k = S_{i_1 \cdots i_k}(\gamma) 
\end{align*}
for all $i_1,\ldots,i_k \in \{1,\ldots,n\}$. Hence, in view of Definition \ref{def: signature of Lie group valued path}, we actually obtain that $\bS(\bar \gamma) = \bbS(\gamma)$ for all $\gamma \in \cP(G)$. For later use, we record the above observation into the following definition and proposition.

\begin{definition}\label{def: the mapping phi}
We use $\Phi: \cP(G) \to \cP(\fg)$ to denote the mapping that transforms (piecewise) smooth paths in the Lie group $G$ to (piecewise) smooth paths in the Lie algebra $\fg$ of $G$ which is defined by
$$
\Phi(\gamma) = \bar \gamma,
$$
where $\bar \gamma(t) = \int_0^t f_\gamma(s) ds$ with $f_\gamma(t) = (L_{\gamma(t)^{-1}})_*(\gamma^\prime(t))$ for all $t \in [0,1]$. 
\end{definition}

\begin{proposition}\label{prop: two signatures are same}
  Let $\Phi: \cP(G) \to \cP(\fg)$ be the mapping defined as in Definition \ref{def: the mapping phi} such that $\Phi(\gamma) \in \cP(\fg)$ is the unique curve in $\fg$ starting from $0 \in \fg$ and satisfies 
  $$
  \Phi(\gamma)^\prime(t)  = (L_{\gamma(t)^{-1}})_*(\gamma^\prime(t)) \in T_e G \cong \fg.
  $$
Let $\bbS(\gamma)$ be the signature of the $G$-valued (piecewise) $C^1$-smooth path $\gamma$ defined as in Definition \ref{def: signature of Lie group valued path} and let $\bS(\Phi(\gamma))$ be the classical signature of the $\fg$-valued (piecewise) $C^1$-smooth path $\Phi( \gamma)$. Then we have
$$
\bbS(\gamma) = \bS(\Phi(\gamma)).
$$
\end{proposition}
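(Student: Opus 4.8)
The plan is to reduce the whole statement to the single pointwise identity
\[
\omega_i\big(\gamma'(t)\big)=\big(\bar\gamma^{\,i}\big)'(t),\qquad i=1,\dots,n,\ t\in[0,1],
\]
after which $\bbS(\gamma)=\bS(\Phi(\gamma))$ falls out by comparing the two families of iterated integrals term by term; so the argument is essentially a clean repackaging of the discussion preceding the statement. First I would check that $\Phi$ is well defined: since $\gamma$ is $C^1$, the map $t\mapsto\gamma'(t)$ is continuous, and since left translation on $G$ depends smoothly on the base point, $f_\gamma(t)=d(L_{\gamma(t)^{-1}})_{\gamma(t)}\gamma'(t)\in\fg$ is continuous in $t$; hence $\bar\gamma(t)=\int_0^t f_\gamma(s)\,ds$ is a well-defined $C^1$ curve in $\fg$ with $\bar\gamma(0)=0$ and $\bar\gamma'(t)=f_\gamma(t)$, so $\Phi(\gamma)=\bar\gamma\in\cP(\fg)$ and its classical signature $\bS(\bar\gamma)$ is defined (a $C^1$ path on a compact interval has bounded variation).

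The heart of the proof is the pointwise identity. Writing $f_\gamma(t)=\sum_i c_i(t)e_i$ in the fixed basis gives $\bar\gamma^{\,i}(t)=\int_0^t c_i(s)\,ds$, hence $(\bar\gamma^{\,i})'(t)=c_i(t)=e_i^*\big(f_\gamma(t)\big)$, where $\{e_i^*\}$ denotes the basis of $T^*_eG$ dual to $\{e_i\}$. On the other hand, the frame $\{e_j\}$ is left invariant on $G$, and therefore so is its dual coframe $\{\omega_j\}$ (if $(L_g)_*e_j|_h=e_j|_{gh}$ for all $g,h$, then $L_g^*\omega_i=\omega_i$, as one sees by evaluating on the frame), with $\omega_i|_e=e_i^*$. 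Consequently, for every $v\in T_{\gamma(t)}G$,
\[
\omega_i(v)=\omega_i\big|_{\gamma(t)}(v)=\big(L_{\gamma(t)^{-1}}^*\omega_i\big)\big|_{\gamma(t)}(v)=\omega_i\big|_e\big(d(L_{\gamma(t)^{-1}})_{\gamma(t)}v\big)=e_i^*\big(d(L_{\gamma(t)^{-1}})_{\gamma(t)}v\big),
\]
and evaluating at $v=\gamma'(t)$ yields $\omega_i(\gamma'(t))=e_i^*(f_\gamma(t))=(\bar\gamma^{\,i})'(t)$. This is the only step where the geometric structure of $G$ — left invariance of the frame and of the coframe — genuinely enters, and it is the step I would take most care to state precisely; everything else is formal.

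Finally, substituting this identity into Definition \ref{def: signature of Lie group valued path},
\[
S_{i_1\cdots i_k}(\gamma)=\int_{0<u_1<\cdots<u_k<1}\omega_{i_1}(\gamma'(u_1))\cdots\omega_{i_k}(\gamma'(u_k))\,du_1\cdots du_k=\int_{0<u_1<\cdots<u_k<1}(\bar\gamma^{\,i_1})'(u_1)\cdots(\bar\gamma^{\,i_k})'(u_k)\,du_1\cdots du_k=\bS_{i_1\cdots i_k}(\bar\gamma)
\]
for all $k\ge1$ and all index tuples $(i_1,\dots,i_k)$. Since these scalars are exactly the coefficients of $S_k(\gamma)$ and of $\bS_k(\bar\gamma)$ on the common basis tensor $e_{i_1}\otimes\cdots\otimes e_{i_k}$, and since $S_0(\gamma)=1=\bS_0(\bar\gamma)$ as well, summing over $k$ gives $\bbS(\gamma)=\bS(\Phi(\gamma))$. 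I do not expect any real obstacle here: the only non-bookkeeping content is the left-invariance identity above, so the proof is short.
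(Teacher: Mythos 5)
Your proposal is correct and follows essentially the same route as the paper: the paper's own justification (given in the discussion immediately preceding the proposition) likewise reduces everything to the pointwise identity $\omega_i(\gamma'(t))=e_i^*(f_\gamma(t))=(\bar\gamma^{\,i})'(t)$ and then matches the iterated integrals coefficient by coefficient. Your additional care in verifying that $\Phi$ is well defined and that left invariance of the frame forces left invariance of the coframe only makes explicit what the paper takes for granted.
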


By using the mapping $\Phi$ defined in Proposition \ref{prop: two signatures are same}, we can apply many techniques from classical signature theory designed for vector space-valued (piecewise) smooth paths to establish useful results for the signature of $G$-valued (piecewise) smooth paths. For instance, we can prove that the signatures of all (piecewise) smooth paths $\gamma \in \cP(G)$ are uniformly bounded by a constant only depending on the diameter of $G$. 

\begin{definition}\label{def: Hilbert space in tensor algebra}
 Let $\langle \cdot, \cdot \rangle_G$ be a bi-invariant Riemannian metric on $G$, and $\langle \cdot, \cdot \rangle_{\fg}$ be its restriction on $T_e G \cong \fg$, which is an inner product on $\fg$ such that $(\fg,\langle \cdot, \cdot \rangle_{\fg})$ is a Hilbert space. Then for every $k \ge 1$, the tensor space $\fg^{\otimes k}$ admits a natural inner product $\langle \cdot,\cdot \rangle_{\fg^{\otimes k}}$ such that if $\{e_1,\ldots,e_n\}$ is an orthonormal basis in $(\fg,\langle \cdot, \cdot \rangle_{\fg})$, then $\{e_{i_1} \otimes \ldots \otimes e_{i_k}: i_1,\ldots,i_k =1,\ldots,n\}$ is an orthonormal basis with respect to $\langle \cdot,\cdot \rangle_{\fg^{\otimes k}}$. Let $\|\cdot \|_{\fg^{\otimes k}}$ denote the norm induced by $\langle \cdot,\cdot \rangle_{\fg^{\otimes k}}$ on $\fg^{\otimes k}$. We define a Hilbert space  
 $$
 \mathcal H := \bigg\{\mathbf{x} \in T((\fg)): \sum_{k=0}^\infty \|\mathbf{x}^k\|^2_{\fg^{\otimes k}} < \infty  \bigg\} \subset T((\fg)),
 $$
 (where $\mathbf{x}^k \in \fg^{\otimes k}$ denotes the projection of $\mathbf x$ into $\fg^{\otimes k}$) equipped with the inner product
 $$
 \langle \mathbf{x}, \mathbf{y} \rangle_{\mathcal{H}} := \sum_{k=0}^\infty \langle \mathbf{x}^k,\mathbf{y}^k\rangle_{\fg^{\otimes k}}
 $$
 and the Hilbert norm
 $$
 \|\mathbf{x}\|_{\mathcal{H}} := \bigg(\sum_{k=0}^\infty \|\mathbf{x}^k\|^2_{\fg^{\otimes k}}\bigg)^{1/2}.
 $$
\end{definition}

\begin{proposition}\label{prop: boundedness of signature}
For any $\gamma \in \cP(G)$, we have $\bbS(\gamma) \in \mathcal{H}$ and there is a constant $C$ only depending on $\dim G$ and the length of $\gamma$ such that
$$
\|\bbS(\gamma)\|_{\mathcal{H}} \le C.
$$
In particular, for all $\gamma \in \Geod(G)$ where $\Geod(G):= \{\gamma \in \cP(G): \forall s,t \in [0,1],  d(\gamma(s), \gamma(t)) = |t-s|d(\gamma(0), \gamma(1))\}$ be the space of all $C^1$-smooth constant speed shortest geodesics in $G$, the above bound $C$ only depends on $\dim G$ and $\diam(G) = \sup \{d(g,h):g,h  \in G\}$.
\end{proposition}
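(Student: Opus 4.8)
The plan is to reduce the statement to the classical signature of a path in the vector space $\fg$ via the map $\Phi$ of Proposition~\ref{prop: two signatures are same}, and then invoke the standard factorial decay of iterated integrals of bounded variation paths. So the first step is to record that $\bbS(\gamma)=\bS(\Phi(\gamma))$, where $\bar\gamma:=\Phi(\gamma)$ is the $C^1$ curve in $\fg$ with $\bar\gamma(0)=0$ and $\bar\gamma'(t)=f_\gamma(t)=(L_{\gamma(t)^{-1}})_*\gamma'(t)$, and then to compute the length of $\bar\gamma$. Since $\langle\cdot,\cdot\rangle_G$ is left-invariant, each $L_{\gamma(t)^{-1}}$ is an isometry, so $|\bar\gamma'(t)|_\fg=|f_\gamma(t)|_\fg=|\gamma'(t)|_{\gamma(t)}$ for every $t$; integrating, the length of $\bar\gamma$ measured in $(\fg,\langle\cdot,\cdot\rangle_\fg)$ equals the Riemannian length $L(\gamma)=\int_0^1|\gamma'(t)|_{\gamma(t)}\,dt$ of $\gamma$.

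The second step is the level-wise bound. Writing $\Delta_k=\{0<u_1<\cdots<u_k<1\}$, Definition~\ref{def: signature of Lie group valued path} together with $\omega_i(\gamma'(u))=(\bar\gamma^i)'(u)$ gives $\bS_k(\bar\gamma)=\int_{\Delta_k}\bar\gamma'(u_1)\otimes\cdots\otimes\bar\gamma'(u_k)\,du_1\cdots du_k$. Since $\|\cdot\|_{\fg^{\otimes k}}$ is multiplicative on simple tensors, the triangle inequality for this $\fg^{\otimes k}$-valued integral yields $\|\bS_k(\bar\gamma)\|_{\fg^{\otimes k}}\le\int_{\Delta_k}\prod_{j=1}^k|\bar\gamma'(u_j)|_\fg\,du_1\cdots du_k=\tfrac{1}{k!}L(\gamma)^k$. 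Summing the squares over $k$ and bounding the series by $\big(\sum_{k\ge 0}L(\gamma)^k/k!\big)^2=e^{2L(\gamma)}$, one concludes $\bbS(\gamma)\in\mathcal H$ and $\|\bbS(\gamma)\|_{\mathcal H}\le e^{L(\gamma)}=:C$, which depends only on $L(\gamma)$ (a fortiori only on $\dim G$ and $L(\gamma)$). Finally, for $\gamma\in\Geod(G)$ the defining property forces constant speed equal to $d(\gamma(0),\gamma(1))$, hence $L(\gamma)=d(\gamma(0),\gamma(1))\le\diam(G)$, so $C=e^{\diam(G)}$ works.

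I do not expect a genuine obstacle here: the substance is already packaged in Proposition~\ref{prop: two signatures are same} and in the elementary factorial estimate. The two points that need a little care are (i) checking that left-invariance of the metric makes the lengths of $\gamma$ and $\bar\gamma$ agree, so that the estimate is stated in the intrinsic length of $\gamma$ rather than some frame-dependent quantity, and (ii) making sure the inner product used to build $\|\cdot\|_{\fg^{\otimes k}}$ and the one used to measure $|\bar\gamma'|_\fg$ are both $\langle\cdot,\cdot\rangle_\fg$ as fixed in Definition~\ref{def: Hilbert space in tensor algebra} — both hold by construction. As an aside, one could bypass $\Phi$ and bound the scalar components $S_{i_1\cdots i_k}(\gamma)$ directly, but that route only produces a cruder constant growing with $\dim G$, whereas the argument above does not use the dimension at all.
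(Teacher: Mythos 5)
Your proof is correct and follows essentially the same route as the paper: reduce to the linear path $\Phi(\gamma)$ in $\fg$ via Proposition~\ref{prop: two signatures are same}, observe that left-invariance of the metric gives $\ell(\Phi(\gamma))=\ell(\gamma)$, and then use factorial decay of the iterated integrals together with $\ell(\gamma)\le\diam(G)$ for constant-speed shortest geodesics. The only difference is that you prove the level-$k$ estimate $\|\bS_k(\Phi(\gamma))\|_{\fg^{\otimes k}}\le \ell(\gamma)^k/k!$ explicitly (yielding the clean bound $e^{\ell(\gamma)}$, independent of $\dim G$), whereas the paper simply cites the classical boundedness result from the rough-path literature.
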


\begin{proof}
The proof can be found in \cite[Lemma 35]{LeeSig2020}, but for reader's convenience we provide the sketch of the proof here. In view of Proposition \ref{prop: two signatures are same}, we have $\bbS(\gamma) = \bS(\Phi(\gamma))$; on the other hand, for the vector space-valued smooth curve $\Phi(\gamma) \in \fg$, the classical result, see e.g. \cite{LyonsQian2007} or \cite{FrizVictoir2010}, tells us that there is a constant $C$ only depending on the dimension of $\fg$ (which is equal to $n = \dim G$) and the length of $\Phi(\gamma)$ with respect to the inner product $\langle \cdot,\cdot \rangle_{\fg}$ (denoted by $\ell(\Phi(\gamma)) = \int_0^1 \|\Phi(\gamma)^\prime(t)\|_{\fg} dt$) in an increasing way such that
$$
\|\bbS(\gamma)\|_{\cH} = \|\bS(\Phi(\gamma)\|_{\mathcal{H}} \le C = C(n, \ell(\Phi(\gamma)).
$$
Now, we note that since $\langle \cdot,\cdot \rangle_{\fg}$ is the restriction of the bi-invariant metric $\langle \cdot, \cdot \rangle_G$ on $T_e G \cong \fg$, it holds that
\begin{align*}
    \ell(\Phi(\gamma)) &= \int_0^1  \langle \Phi(\gamma)^\prime(t), \Phi(\gamma)^\prime(t) \rangle_{\fg}^{1/2}dt \\
    &=\int_0^1  \langle (L_{\gamma(t)^{-1}})_*(\gamma^\prime(t)), (L_{\gamma(t)^{-1}})_*(\gamma^\prime(t)) \rangle_{\fg}^{1/2}dt\\
    &=\int_0^1  \langle \gamma^\prime(t), \gamma^\prime(t) \rangle_{G}^{1/2}dt\\
    &=\ell(\gamma)
\end{align*}
where $\ell(\gamma)$ denotes the length of $\gamma$ with respect to $\langle \cdot, \cdot \rangle_G$. So, for all $\gamma \in \cP(G)$, we deduce from the above that
$$
\|\bbS(\gamma)\|_{\cH} = \|\bS(\Phi(\gamma)\|_{\mathcal{H}} \le C = C(n, \ell(\gamma)).
$$
If $\gamma \in \Geod(G)$, since $\ell(\gamma) = d(\gamma(0), \gamma(1)) \le \diam (G)$, we have
$\|\bbS(\gamma)\|_{\cH} \le C(n, \diam(G))$.
\end{proof}

Next we will show the stability of the signature map on $\cP(G)$. As in \cite[Section 3.3]{LeeSig2020}, we endow the pathspace $\cP(G)$ with a metric
\begin{equation}\label{eq: metric on pathspace}
    d_{\cP(G)} (\alpha, \beta) := \sup_{t \in [0,1]} \|\Phi(\alpha)^\prime(t) - \Phi(\beta)^\prime(t)\|_{\fg} + d(\alpha(0), \beta(0)),
\end{equation}
where $\Phi: \cP(G) \to \cP(\fg)$ is defined as in Proposition \ref{prop: two signatures are same}.

\begin{proposition}\label{prop: stability of signature}
 The signature map $\bbS: \cP(G) \to \cH$ is locally Lipschitz continuous with respect to the metric $d_{\cP(G)} (\alpha, \beta)$: there is a constant $C$ only depending on $\dim G$ and $\lambda$ such that
 $$
 \|\bbS(\alpha) - \bbS(\beta)\|_{\cH} \le C d_{\cP(G)} (\alpha, \beta)
 $$
 for all $\alpha, \beta \in \cP(G)$ with $\ell(\alpha) \le \lambda$ and $\ell(\beta) \le \lambda$. In particular, on the set $\Geod(G)$ of all constant speed shortest geodesics in $G$, $\bbS: \Geod(G) \to \cH$ is Lipschitz.
\end{proposition}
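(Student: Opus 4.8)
The plan is to transport the problem to the linear space $\fg$ via the map $\Phi$ and then invoke the classical local Lipschitz estimate for signatures of finite-variation paths, in exactly the spirit of the proof of Proposition~\ref{prop: boundedness of signature}. By Proposition~\ref{prop: two signatures are same} we have $\bbS(\alpha)=\bS(\Phi(\alpha))$ and $\bbS(\beta)=\bS(\Phi(\beta))$, where $\Phi(\alpha),\Phi(\beta)\in\cP(\fg)$ are $C^1$ curves in the inner product space $(\fg,\langle\cdot,\cdot\rangle_\fg)$ with $\Phi(\alpha)'=f_\alpha$ and $\Phi(\beta)'=f_\beta$. The length computation carried out inside the proof of Proposition~\ref{prop: boundedness of signature} gives $\ell(\Phi(\alpha))=\ell(\alpha)\le\lambda$ and $\ell(\Phi(\beta))=\ell(\beta)\le\lambda$, while directly from the definition \eqref{eq: metric on pathspace} of $d_{\cP(G)}$,
\[
\int_0^1\|\Phi(\alpha)'(t)-\Phi(\beta)'(t)\|_\fg\,dt\;\le\;\sup_{t\in[0,1]}\|f_\alpha(t)-f_\beta(t)\|_\fg\;\le\;d_{\cP(G)}(\alpha,\beta).
\]
Since the classical signature depends only on the derivative of a path (and $\Phi(\cdot)$ always starts at $0$), the extra term $d(\alpha(0),\beta(0))$ in $d_{\cP(G)}$ plays no role beyond enlarging the metric. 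Thus it suffices to prove the linear-space statement: for $x,y\in\cP(\fg)$ with $\ell(x),\ell(y)\le\lambda$, one has $\|\bS(x)-\bS(y)\|_\cH\le C(\lambda)\int_0^1\|x'(t)-y'(t)\|_\fg\,dt$.

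I would prove this level by level. Fix $k\ge1$ and telescope, replacing the increments of $x$ by those of $y$ one slot at a time:
\[
\bS_k(x)-\bS_k(y)=\sum_{j=1}^{k}\int_{0<t_1<\cdots<t_k<1}y'(t_1)\otimes\cdots\otimes y'(t_{j-1})\otimes\big(x'(t_j)-y'(t_j)\big)\otimes x'(t_{j+1})\otimes\cdots\otimes x'(t_k)\,dt_1\cdots dt_k.
\]
In the $j$-th summand, freezing the distinguished variable $t_j=s$ factors the integral into an order-$(j-1)$ iterated integral of $x'$--wait, of $y'$ on $[0,s]$, the increment $x'(s)-y'(s)$, and an order-$(k-j)$ iterated integral of $x'$ on $[s,1]$; the standard simplex bound $\|\int_{\Delta_m}\cdots\|_{\fg^{\otimes m}}\le\ell(\cdot)^m/m!$ (which uses that $\{e_{i_1}\otimes\cdots\otimes e_{i_m}\}$ is orthonormal in $\fg^{\otimes m}$, so the norm is multiplicative on pure tensors) together with sub-additivity of length over $[0,s]$ and $[s,1]$ yields
\[
\|\bS_k(x)-\bS_k(y)\|_{\fg^{\otimes k}}\le\Big(\int_0^1\|x'(s)-y'(s)\|_\fg\,ds\Big)\sum_{j=1}^{k}\frac{\lambda^{j-1}}{(j-1)!}\cdot\frac{\lambda^{k-j}}{(k-j)!}=\Big(\int_0^1\|x'-y'\|_\fg\Big)\frac{(2\lambda)^{k-1}}{(k-1)!}.
\]
Summing the squares over $k$ gives $\|\bS(x)-\bS(y)\|_\cH\le C(\lambda)\int_0^1\|x'-y'\|_\fg\,dt$ with $C(\lambda)=\big(\sum_{m\ge0}(2\lambda)^{2m}/(m!)^2\big)^{1/2}<\infty$; this constant is in fact dimension-free, which is stronger than claimed. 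Combining with the first display produces $\|\bbS(\alpha)-\bbS(\beta)\|_\cH\le C(\lambda)\,d_{\cP(G)}(\alpha,\beta)$, i.e.\ the asserted local Lipschitz bound with $C$ depending on $\lambda$ (a fortiori on $\dim G$ and $\lambda$).

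For the final assertion, observe that any $\gamma\in\Geod(G)$ satisfies $\ell(\gamma)=d(\gamma(0),\gamma(1))\le\diam(G)$, and $\diam(G)<\infty$ since $G$ is compact; hence $\Geod(G)\subset\{\gamma\in\cP(G):\ell(\gamma)\le\diam(G)\}$, and the estimate above with $\lambda=\diam(G)$ is a genuine (global) Lipschitz estimate on $\Geod(G)$, with constant $C(\dim G,\diam(G))$. The main obstacle is the level-$k$ telescoping estimate: one must set up the single-distinguished-slot decomposition correctly, control the two ``pure'' blocks of increments by the factorial simplex bounds, and verify that $\sum_k(2\lambda)^{2(k-1)}/((k-1)!)^2$ converges so that the Hilbert norm on the left is finite; the rest is bookkeeping. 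Alternatively, one may simply cite the corresponding continuity theorem for signatures of bounded-variation (or $C^1$) paths in $\R^n$, e.g.\ from \cite{FrizVictoir2010} or \cite{LyonsQian2007}, and transport it through $\Phi$ exactly as above.
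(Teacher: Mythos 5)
Your proof is correct, and its overall skeleton is the same as the paper's: reduce to the linear case via $\Phi$ using Proposition \ref{prop: two signatures are same}, bound the $1$-variation distance $\int_0^1\|\Phi(\alpha)'-\Phi(\beta)'\|_{\fg}\,dt$ by $d_{\cP(G)}(\alpha,\beta)$, and handle $\Geod(G)$ by noting $\ell(\gamma)\le\diam(G)<\infty$. The difference is in how the linear-space estimate is obtained: the paper simply cites the classical continuity theorem for signatures of bounded-variation paths in $\R^n$ from \cite{LyonsQian2007} and \cite{FrizVictoir2010} (your closing "alternative" is exactly the paper's proof), whereas you prove it from scratch by the single-slot telescoping decomposition of $\bS_k(x)-\bS_k(y)$, the factorial simplex bound on each pure block, and the binomial resummation $\sum_j \lambda^{j-1}\lambda^{k-j}/((j-1)!(k-j)!)=(2\lambda)^{k-1}/(k-1)!$. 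Your level-$k$ estimate is correct (the Hilbert--Schmidt norm on $\fg^{\otimes m}$ is multiplicative on pure tensors, so the bound $\|\int_{\Delta_m}\cdots\|\le \ell^m/m!$ holds), and summing the squares gives the explicit constant $C(\lambda)=\bigl(\sum_{m\ge 0}(2\lambda)^{2m}/(m!)^2\bigr)^{1/2}$. What your route buys is a self-contained argument with an explicit constant that is independent of $\dim G$, which is slightly stronger than the statement as written; what the paper's route buys is brevity. The only cosmetic issue is the stray self-correction ("of $x'$--wait, of $y'$") in the description of the $j$-th summand, which should be cleaned up, but the mathematics there is right.
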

\begin{proof}
 This stability result was also mentioned in \cite[Corollary 38]{LeeSig2020}  without a concrete proof. Again for reader's convenience we give a short proof here. Since $\Phi(\alpha)$ and $\Phi(\beta)$ are $\fg$-valued smooth paths, the classical stability result for signatures of vector-space valued paths, see e.g. \cite{LyonsQian2007} and \cite{FrizVictoir2010}, shows that there is a constant $C = C(n,\max\{\ell(\Phi(\alpha)), \ell(\Phi(\beta))\})$ such that 
 $$
  \|\bS(\Phi(\alpha)) - \bS(\Phi(\beta))\|_{\cH} \le C \int_0^1 \|\Phi(\alpha)^\prime(t) - \Phi(\beta)^\prime(t)\|_{\fg} dt \le Cd_{\cP(G)} (\alpha, \beta)
 $$
 (In fact, it is easy to see that $\int_0^1 \|\Phi(\alpha)^\prime(t) - \Phi(\beta)^\prime(t)\|_{\fg} dt$ is exactly the bounded variation distance between $\Phi(\alpha)$ and $\Phi(\beta)$, i.e.,
 \begin{align*}
     \|\Phi(\alpha) - \Phi(\beta)\|_{1-\var} &= \sup_{0=t_0<\ldots<t_N=1}\|\Phi(\alpha)_{t_i,t_{i+1}} - \Phi(\beta)_{t_i,t_{i+1}}\|_{\fg} \\
     &= \int_0^1 \|\Phi(\alpha)^\prime(t) - \Phi(\beta)^\prime(t)\|_{\fg} dt
 \end{align*}
 for $\Phi(\alpha)_{t_i,t_{i+1}} := \Phi(\alpha)(t_{i+1}) - \Phi(\alpha)(t_{i})$, and the bounded variation metric is used for measuring the stability of signature mappings in classical signature theory). 
 
 If $\alpha$ and $\beta$ belong to $\Geod(G)$, then by the proof of Proposition \ref{prop: boundedness of signature} we know that $\max\{\ell(\Phi(\alpha)), \ell(\Phi(\beta))\} = \max\{\ell(\alpha), \ell(\beta)\} \le \diam(G)$, therefore, using Proposition \ref{prop: two signatures are same}, we finally get that
 $$
 \|\bbS(\alpha) - \bbS(\beta)\|_{\cH} = \|\bS(\Phi(\alpha)) - \bS(\Phi(\beta))\|_{\cH} \le C(n, \diam(G))d_{\cP(G)} (\alpha, \beta),
 $$
as claimed.
\end{proof}

\begin{remark}\label{remark: important properties of signature of Lie group valued paths}
As we have mentioned before, since the signatures of $G$-valued (piecewise) smooth paths defined as in Definition \ref{def: signature of Lie group valued path} can be identified with the signatures of $\fg$-valued (piecewise) smooth paths under the mapping $\Phi$, all results from the classical signature theory designed for vector space-valued (piecewise) smooth paths remain valid. Besides the above properties contained in Propositions \ref{prop: boundedness of signature} and \ref{prop: stability of signature}, we also have the following observations, most of which can also be found in \cite{LeeSig2020}:
\begin{enumerate}
    \item For two paths $\alpha$ and $\beta$ in $\cP(G)$, $\bbS(\alpha) = \bbS(\beta)$ if and only if $\alpha$ and $\beta$ are tree-like equivalent. For the definition of tree-like equivalence, see e.g. \cite[Definition 1.1]{BGLY16}. As a consequence, if we use $\sim_{tree}$ to denote the tree-like equivalence relation, and let $\widetilde \cP(G) = \cP(G)/\sim_{tree}$ be the quotient space of $\cP(G)$ modulo $\sim_{tree}$, then the signature mapping $\bbS: \widetilde \cP(G) \to \cH$ defined by $\bbS([\alpha]) := \bbS(\alpha)$ is well-defined for any equivalence class $[\alpha] \in \widetilde \cP(G)$ and is injective on $\widetilde \cP(G)$. 
    \item The shuffle identity still holds for the signature mapping $\bbS$ on $\cP(G)$: for any $h_1, h_2  \in T((\fg))$ and any $\alpha \in \cP(G)$, one has
    $$
    \langle h_1, \bbS(\alpha)\langle \langle h_2, \bbS(\alpha) \rangle = \langle h_1 \shuffle h_2, \bbS(\alpha)\rangle, 
    $$
    where $\langle h, \bbS(\alpha) \rangle$ denotes the dual pairing between $h \in T(\fg)$ and $\bbS(\alpha) \in T((\fg))$, and $\shuffle$ denotes the shuffle product (see e.g. \cite{Freeliealgebras}). 
    \item In view of Proposition \ref{prop: stability of signature} and the previous two observations, the set $\{\langle h, \Lambda \circ \bbS(\cdot) \rangle: h \in T(\fg)\}$ is an algebra in $C_b(\widetilde \cP(G))$ which can separate points, where $C_b(\widetilde \cP(G))$ is the family of all continuous and bounded functions on $\widetilde \cP(G)$ equipped with the quotient topology induced by $d_{\cP(G)}$ on $\cP(G)$, and $\Lambda: T((\fg)) \to T((\fg))$ is a tensor normalization (see \cite{Chevyrev2022signaturekernel} for the construction of $\Lambda$) which preserves the shuffle identity and satisfies that $\|\Lambda(x)\|_{\cH} \le 1$ for all $x \in \cH$. Hence, by the Giles' Theorem (see \cite[Theorem 9]{Chevyrev2022signaturekernel} or \cite{Giles71}), the set $\{\langle h, \Lambda \circ \bbS(\cdot) \rangle: h \in T(\fg)\}$ is dense in $C_b(\widetilde \cP(G))$ with respect to the strict topology on $C_b(\widetilde \cP(G))$ (for the definition of strict topology, see \cite{Giles71}). In other words, the normalized signature $\Lambda \circ \bbS$ satisfies the universal approximation property in $C_b(\widetilde \cP(G))$ for the strict topology. As a consequence, the normalized signature $\Lambda \circ \bbS$ is characteristic to the space of finite regular Borel measures on $\widetilde \cP(G)$: for $\nu_1$ and $\nu_2$ two finite regular Borel measures on $\widetilde \cP(G)$, one has $\nu_1 = \nu_2$ if and only if $\int \Lambda \circ \bbS d\nu_1 = \int \Lambda \circ \bbS d\nu_2$, where these integrals are $\cH$-valued Bochner integrals.
    For more details on the above results, we refer to \cite{Chevyrev2022signaturekernel} or \cite[Section 4]{LeeSig2020}. 
    \item Since the signature mapping $\bbS$ (and the normalized signature $\Lambda \circ \bbS$) take values in a Hilbert space $\cH$, they induce a Reproducing Kernel Hilbert Space (for short, RKHS) $\cH_k$ with the kernel $k(\alpha, \beta) := \langle \bbS(\alpha), \bbS(\beta) \rangle_{\cH}$ for $\alpha, \beta \in \cP(G)$. In particular, one can use the so-called signature kernel trick, see \cite{kiraly2019kernels}, \cite{Chevyrev2022signaturekernel} and \cite{Cass2021general}, which was originally designed for vector-space valued paths, to compute the kernel $k(\alpha, \beta)$ efficiently (without having to compute explicit $\bbS(\alpha)$ and $\bbS(\beta)$) for $G$-valued paths $\alpha$ and $\beta$. For instance, the kernel $k(\alpha, \beta)$ is actually the solution $F(1,1)$ for the following Goursat PDE
    $$
    \frac{\partial^2}{\partial s \partial t}F(s,t) = \langle \alpha^\prime(s), \beta^\prime(t) \rangle_G F(s,t), \quad F(0,\cdot) = F(\cdot,0) = 1,
    $$
    see \cite{SalviPDE}. For more details on this kernel learning technique for $G$-valued path signature $\bbS$ and its applications in human action recognition and random walk on Lie groups, please see \cite[Sections 4,5]{LeeSig2020}.
\end{enumerate}
\end{remark}

In order to define the average signature over $\cP(G)$ in the next section, we need the following purely measure-theoretic result.

\begin{proposition}\label{prop: measurable selector}
 Let $\Geod(G):= \{\gamma \in \cP(G): \forall s,t \in [0,1],  d(\gamma(s), \gamma(t)) = |t-s|d(\gamma(0), \gamma(1))\}$ be the space of all $C^1$-smooth constant speed shortest geodesics in $G$.  Then we have
 \begin{enumerate}
     \item $(\Geod(G), d_{\cP(G)})$ is a Polish metric space (i.e., complete and separable), where $d_{\cP(G)}(\cdot,\cdot)$ is defined as in \eqref{eq: metric on pathspace}.
     \item There exists a Borel measurable mapping $\GeodSel: G \times G \to \Geod(G)$ such that for any pair $(g,h) \in G \times G$, $\GeodSel(g,h) \in \Geod(G)$ is a $C^1$-smooth constant speed geodesic with $\GeodSel(g,h)(0) = g$ and $\GeodSel(g,h)(1) = h$, where $G \times G$ is equipped with the product Borel $\sigma$-algebra generated by $d(\cdot,\cdot)$ on $G$ and $\Geod(G)$ is equipped with the Borel $\sigma$-algebra $\cB(d_{\cP(G)})$ generated by the metric $d_{\cP(G)}$.
     \item The composition mapping $\bbS \circ \GeodSel: G \times G \to \cH$ is Borel measurable.
 \end{enumerate}
\end{proposition}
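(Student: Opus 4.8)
The plan is to identify $\Geod(G)$ with a concrete compact subset of $G\times\fg$ and then read off all three assertions from standard facts about Polish spaces and measurable selections.

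First I would exploit bi-invariance: the geodesics through $g\in G$ are precisely the curves $t\mapsto g\exp(tv)$, $v\in\fg$, and for such a curve $\Phi(g\exp(t\,v))$ is the straight line $t\mapsto t v$ in $\fg$ (a one-line computation from $\gamma'(t)=(L_{\gamma(t)})_*v$), with the curve lying in $\Geod(G)$ exactly when $\|v\|_{\fg}=d(g,g\exp v)$. Hence $\gamma\mapsto(\gamma(0),\Phi(\gamma)'(0))$ should be an isometry from $(\Geod(G),d_{\cP(G)})$ onto
\[ Z:=\{(g,v)\in G\times\fg:\ \|v\|_{\fg}=d(g,g\exp v)\} \]
carrying the product metric $(g,v),(g',v')\mapsto d(g,g')+\|v-v'\|_{\fg}$, because for geodesics $d_{\cP(G)}(\alpha,\beta)$ collapses to $\|\Phi(\alpha)'(0)-\Phi(\beta)'(0)\|_{\fg}+d(\alpha(0),\beta(0))$. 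Since $(g,v)\mapsto\|v\|_{\fg}-d(g,g\exp v)$ is continuous and nonnegative (length $\ge$ distance), $Z$ is its zero set, hence closed; it is in fact compact, since $\|v\|_{\fg}=d(g,g\exp v)\le\diam(G)$ on $Z$. A closed subset of the Polish space $G\times\fg$ is Polish, which gives (1).

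For (2) I would work with $E:=\{(g,h,v)\in(G\times G)\times\fg:\ g\exp v=h,\ \|v\|_{\fg}=d(g,h)\}$, which by the same reasoning is closed, bounded in $v$ by $\diam(G)$, hence compact, and whose projection $\pi$ onto $G\times G$ is surjective with compact fibers $E_{(g,h)}$ (existence of minimizing geodesics by Hopf--Rinow, together with the fact that these are one-parameter subgroups up to left translation). The fiber multifunction $(g,h)\mapsto E_{(g,h)}$ is then weakly measurable: for open $U\subseteq\fg$ the set $\{(g,h):E_{(g,h)}\cap U\ne\emptyset\}=\pi\bigl(E\cap((G\times G)\times U)\bigr)$ is a continuous image of a relatively open, hence $\sigma$-compact, subset of the compact set $E$, so it is $\sigma$-compact and a fortiori Borel. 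The Kuratowski--Ryll-Nardzewski selection theorem then supplies a Borel map $v\colon G\times G\to\fg$ with $(g,h,v(g,h))\in E$ for every $(g,h)$. Setting $\GeodSel(g,h)(t):=g\exp(t\,v(g,h))$ produces a curve in $\Geod(G)$ with $\GeodSel(g,h)(0)=g$, $\GeodSel(g,h)(1)=h$; and since $(g,h)\mapsto(g,v(g,h))$ is Borel and corresponds to $\GeodSel$ under the isometry of the first step, $\GeodSel$ is Borel, proving (2).

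Claim (3) is then immediate: by Proposition \ref{prop: stability of signature} the map $\bbS$ is Lipschitz, in particular continuous, on $\Geod(G)$, and the composition of a Borel map with a continuous one is Borel; alternatively $\bbS(\GeodSel(g,h))=\sum_{k\ge0}v(g,h)^{\otimes k}/k!$ (the signature of the straight line $t\mapsto t\,v$ being the tensor exponential), which is visibly a Borel function of the Borel map $v$. The step I expect to be the real obstacle is (2): one must find a model in which the relevant multifunction is genuinely Borel-measurable rather than merely analytic-measurable. The decisive observation is that compactness of $G$ forces $\|v\|_{\fg}=d(g,h)\le\diam(G)$ and hence makes $E$ compact, which is precisely what turns the projections $\pi(E\cap(\,\cdot\,\times U))$ into $\sigma$-compact sets and lets Kuratowski--Ryll-Nardzewski deliver a genuinely Borel selector.
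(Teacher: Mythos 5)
Your proof is correct, but it takes a genuinely different route from the paper's for parts (1) and (2). The paper first proves that the whole path space $(\cP(G), d_{\cP(G)})$ is Polish, via the surjective isometry $\gamma\mapsto(\gamma(0),\Phi(\gamma)')$ onto $G\times C([0,1],\fg)$ (with inverse given by solving an ODE), and then shows $\Geod(G)$ is closed in $\cP(G)$ by an ODE-continuity argument combined with passing the metric condition $d(\gamma(s),\gamma(t))=|t-s|d(\gamma(0),\gamma(1))$ to the limit; for the selector it invokes a measurable selection theorem applied to the closed-graph multifunction $(g,h)\mapsto\Gamma_{g,h}$ inside $G\times G\times\Geod(G)$. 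You instead exploit the bi-invariant structure from the outset: since every minimizing constant-speed geodesic is $t\mapsto g\exp(tv)$ with $\Phi(\gamma)'\equiv v$, the map $\gamma\mapsto(\gamma(0),\Phi(\gamma)'(0))$ is an isometry of $\Geod(G)$ onto the closed, bounded, hence \emph{compact} set $Z\subset G\times\fg$, and the selection problem reduces to choosing a Borel $v(g,h)$ from the compact fibers of $E\subset G\times G\times\fg$, handled by $\sigma$-compactness of projections plus Kuratowski--Ryll-Nardzewski. Part (3) is argued identically in both proofs. Your version buys a stronger conclusion (compactness of $(\Geod(G),d_{\cP(G)})$, not just Polishness) and a completely explicit finite-dimensional selector $\GeodSel(g,h)(t)=g\exp(tv(g,h))$, at the cost of being tied to the bi-invariant Lie-group setting; the paper's argument is more robust in that the Polishness of $\cP(G)$ and the closedness of $\Geod(G)$ would survive in a general compact Riemannian manifold where geodesics are not one-parameter subgroups. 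Both ultimately rest on the same measurable-selection machinery, and your verification that the two characterizations of $\Geod(G)$ agree (a $C^1$ metric geodesic is a Riemannian minimizer of the form $g\exp(tv)$ with $\|v\|_{\fg}=d(g,g\exp v)$) is the only point that deserves a sentence of justification rather than the passing remark you give it.
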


\begin{proof}
\begin{enumerate}
    \item First we show that $(\cP(G), d_{\cP(G)})$ is a Polish metric space. Indeed, let $C([0,1],\fg)$ be the space of continuous paths in $\fg$ equipped with the supremum norm $\|\cdot\|_\infty$ (i.e., for $f, \bar f \in C([0,1],\fg)$, $\|f - \bar f\|_\infty := \sup_{t \in [0,1]}\|f(t) - \bar f(t)\|_{\fg}$), consider the mapping $$
    \bar \Phi: \cP(G) \to G \times C([0,1],\fg), \gamma \mapsto (\gamma(0), \Phi(\gamma)^\prime).
    $$
    Note that for every given $(g, t \mapsto f(t)) \in G \times C([0,1], \fg)$, let  $\gamma \in \cP(G)$ be the unique solution to the ODE
    $$
    \gamma^\prime(t) =F(\gamma(t), t), \quad \gamma(0) = g,
    $$
    where $F(h, t):=(L_h)_* f(t) = d(L_h)_e f(t)$ for $h \in G$ and $t \in [0,1]$, then we indeed have 
    $$
    \Phi(\gamma)^\prime(t) = (L_{\gamma(t)^{-1}})_{*}\gamma^\prime(t) = d(L_{\gamma(t)^{-1}})_{\gamma(t)} d(L_{\gamma(t)})_e f(t) = f(t)
    $$
   for all $t \in [0,1]$, which implies that
   $$
   \bar \Phi (\gamma) = (\gamma(0), \Phi(\gamma)^\prime) = (g, t \mapsto f(t)).
   $$
   So, if we denote $\bar \Phi^{-1}(g, t \mapsto f(t)) = \gamma$, then it holds that $\bar \Phi \circ \bar \Phi^{-1} = \text{id}_{G \times C([0,1],\fg)}$ and $\bar \Phi^{-1} \circ \bar \Phi = \text{id}_{\cP(G)}$. This means exactly that 
   $$
   \bar \Phi: (\cP(G), d_{\cP(G)}) \to (G,d) \times (C([0,1],\fg),\|\cdot\|_\infty) 
   $$
    is a surjective isometry. Then as $(G,d)$ and $(C([0,1],\fg),\|\cdot\|_\infty)$ are obviously Polish metric spaces, so is their product and therefore $(\cP(G), d_{\cP(G)})$ is a Polish metric space as well. \\
    Now we prove that $\Geod(G)$ is closed in $(\cP(G), d_{\cP(G)})$. Suppose that $(\gamma^n)_{n\ge 1}$ is a sequence in $\Geod(G)$ converging to another path $\gamma \in \cP(G)$ with respect to the metric $d_{\cP(G)}(\cdot,\cdot)$, which actually implies the uniform convergence of the vector fields $F^n(h,t):= (L_h)_*\Phi(\gamma^n)^\prime(t)$ to $F(h,t):=(L_h)_*\Phi(\gamma)^\prime(t)$ (in $t$) and the convergence of the initial conditions $\gamma^n(0)$ to $\gamma(0)$. 
    The standard continuity result in the ODE theory then  provides 
    $$
    \lim_{n \to \infty}\sup_{t \in [0,1]} d(\gamma(t), \gamma^n(t)) = 0,
    $$
    as every $\gamma^n$ solves the ODE
    $$
    (\gamma^n)^\prime(t) =  d(L_{\gamma^n(t)})_e \underbrace{d(L_{\gamma^n(t)^{-1}})_{\gamma^n(t)} (\gamma^n)^\prime (t)}_{= \Phi(\gamma^n)^\prime(t)} = F^n(\gamma^n(t), t), 
    $$
   with the initial value $\gamma^n(0)$ and $\gamma$ is the unique solution to the ODE $\gamma^\prime(t) = F(\gamma(t),t)$ with the initial value $\gamma(0)$. Then, since every $\gamma_n \in \Geod(G)$ satisfies 
   $$
   \forall s,t \in [0,1], \quad 
   d(\gamma^n(s), \gamma^n(t)) = |t-s|d(\gamma^n(0), \gamma^n(1)) 
   $$
   the uniform convergence $\lim_{n \to \infty}\sup_{t \in [0,1]} d(\gamma(t), \gamma^n(t)) = 0$ ensures that
   $$
   \forall s,t \in [0,1], \quad 
   d(\gamma(s), \gamma(t)) = |t-s|d(\gamma(0), \gamma(1)),
   $$
   that is, $\gamma \in \Geod(G)$ is a $C^1$-smooth constant speed geodesic, whence the closedness of $\Geod(G)$. It follows that $(\Geod(G), d_{\cP(G)})$ is a Polish metric space.
    \item Since $(G,d)$ is a compact Riemannian manifold, it is a geodesic space, i.e., for any pair $(g,h) \in G \times G$, the set $\Gamma_{g,h} := \{\gamma \in \Geod(G): \gamma(0) = g, \gamma(1) = h\} \subset \Geod(G)$ is nonempty. Moreover, it is easy to show (as we have shown the closedness of $\Geod(G)$ in Step 1) that the graph of the multi-valued map $(g,h) \mapsto \Gamma_{g,h}$ is closed in $G \times G \times \Geod(G)$, where $G$ is equipped with the metric $d$ and $\Geod(G)$ is endowed with the metric $d_{\cP(G)}$. Then as $\Geod(G)$ is a Polish metric space, by the measurable selection theorem (\cite[Theorem 18.26]{infinitedimanalysis}) there exists a Borel measurable selector $\GeodSel: G \times G \to \Geod(G)$ such that $\GeodSel(g,h) \in \Gamma_{g,h}$ is a $C^1$-smooth constant speed shortest geodesic joining $g$ and $h$.
    \item In Proposition \ref{prop: stability of signature} we have proved that the signature map $\bbS: \Geod(G) \to \cH$ is Lipschitz continuous with respect to the metric $d_{\cP(G)}(\cdot,\cdot)$, and thus is measurable with respect to the Borel $\sigma$-algebra $\cB(d_{\cP(G)})$. Now, as $\GeodSel: G \times G \to \Geod(G) \subset \cP(G)$ is Borel measurable (with respect to $\cB(d_{\cP(G)})$ on $\Geod(G)$), their composition $\bbS \circ \GeodSel: G \times G \to \cH$ is Borel measurable as well. 
\end{enumerate}
\end{proof}

In the rest of this section, we will record some important properties of the signature on $\cP(G)$. We also refer readers to \cite{LeeSig2020} for a comprehensive introduction of signature of paths in Lie groups.

\begin{proposition}\label{prop:reparametrization}
    The map $\mathbb S$ is unchanged under a reparametrization of $\gamma$.
\end{proposition}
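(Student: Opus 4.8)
The plan is to reduce the statement to the corresponding classical fact for vector-space-valued paths via the identification established in Proposition~\ref{prop: two signatures are same}. Fix a nondecreasing surjective $C^1$ map $\sigma:[0,1]\to[0,1]$ (a reparametrization) and set $\gamma_\sigma:=\gamma\circ\sigma$. First I would check that the linearization $\Phi$ is equivariant under reparametrization, i.e.\ $\Phi(\gamma_\sigma)=\Phi(\gamma)\circ\sigma$. This is a direct computation: by the chain rule $\gamma_\sigma'(t)=\sigma'(t)\,\gamma'(\sigma(t))$, hence $f_{\gamma_\sigma}(t)=(L_{\gamma_\sigma(t)^{-1}})_*\gamma_\sigma'(t)=\sigma'(t)\,f_\gamma(\sigma(t))$, and integrating, $\Phi(\gamma_\sigma)(t)=\int_0^t\sigma'(s)f_\gamma(\sigma(s))\,ds=\int_0^{\sigma(t)}f_\gamma(u)\,du=\Phi(\gamma)(\sigma(t))$, where the middle equality is the change of variables $u=\sigma(s)$.

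Next I would invoke Proposition~\ref{prop: two signatures are same} to write $\bbS(\gamma_\sigma)=\bS(\Phi(\gamma_\sigma))=\bS(\Phi(\gamma)\circ\sigma)$ and $\bbS(\gamma)=\bS(\Phi(\gamma))$, so the claim becomes $\bS(\bar\gamma\circ\sigma)=\bS(\bar\gamma)$ for the $\fg$-valued path $\bar\gamma=\Phi(\gamma)$, now regarded as an $\R^n$-valued path. This is the classical reparametrization invariance of the signature, which I would either cite (e.g.\ \cite{hambly2010uniqueness}, \cite{LyonsQian2007}) or prove in one line: in each iterated integral $\bS_{i_1,\ldots,i_k}(\bar\gamma\circ\sigma)=\int_{0<v_1<\cdots<v_k<1}(\bar\gamma^{i_1})'(\sigma(v_1))\sigma'(v_1)\cdots(\bar\gamma^{i_k})'(\sigma(v_k))\sigma'(v_k)\,dv_1\cdots dv_k$, perform the simultaneous substitution $u_j=\sigma(v_j)$; since $\sigma$ is nondecreasing with $\sigma(0)=0$ and $\sigma(1)=1$ it maps the simplex $\{0<v_1<\cdots<v_k<1\}$ onto $\{0<u_1<\cdots<u_k<1\}$ up to a null set, and the Jacobian factors $\sigma'(v_j)\,dv_j=du_j$ exactly absorb the extra derivatives, yielding $\bS_{i_1,\ldots,i_k}(\bar\gamma)$. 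Alternatively one can bypass $\Phi$ and argue directly on Definition~\ref{def: signature of Lie group valued path}: since $\omega_{i_j}(\gamma_\sigma'(v_j))=\sigma'(v_j)\,\omega_{i_j}(\gamma'(\sigma(v_j)))$, the same change of variables $u_j=\sigma(v_j)$ shows $S_{i_1\cdots i_k}(\gamma_\sigma)=S_{i_1\cdots i_k}(\gamma)$ for every multi-index, hence $\bbS(\gamma_\sigma)=\bbS(\gamma)$.

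I do not anticipate a genuine obstacle here; the only mild technical point is handling a merely nondecreasing (rather than strictly increasing) $C^1$ reparametrization, where $\sigma$ may be locally constant. On such intervals $\gamma_\sigma$ is constant and contributes nothing to the iterated integrals, and the change-of-variables identity still holds by the standard Lebesgue change-of-variables theorem for $C^1$ monotone maps; if one prefers, one may first prove the identity for strictly increasing reparametrizations and then pass to the general case by approximation, using the continuity of $\bbS$ from Proposition~\ref{prop: stability of signature}. So the essential content is just the equivariance $\Phi(\gamma\circ\sigma)=\Phi(\gamma)\circ\sigma$ together with the classical change of variables in the simplex.
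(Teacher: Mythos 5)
Your proof is correct and, in substance, the same as the paper's: both come down to the change of variables $u_j=\sigma(v_j)$ in each iterated integral over the simplex (your ``alternative'' direct argument on the components $S_{i_1\cdots i_k}$ is literally the paper's proof, and your main route through $\Phi$ performs the identical substitution after linearization). The only difference is that you also address merely nondecreasing reparametrizations, whereas the paper's computation implicitly assumes $\kappa'>0$; that is a harmless extra bit of care, not a change of method.
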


\begin{proof}
    Suppose $\tilde \gamma :[a,b]\rightarrow G$ is a reparametrization of $\gamma: [0,1]\rightarrow G$, that is, there is an increasing $C^1$-function $t=\kappa(s)$ defined on $[a,b]$ such that $\kappa(a)=0$, $\kappa(b)=1$ and $\tilde \gamma=\gamma\circ \kappa$. Thus under change of variable $v_i=\kappa(u_i)$ we have
    \begin{align*}
    S_{i_1\cdots i_k}(\tilde \gamma)&=\int_{a<u_1<\cdots<u_k<b}\omega_{i_1}(\tilde \gamma'(u_1))\cdots\omega_{i_k}(\tilde \gamma'(u_k))du_1\cdots du_k\\
    &=\int_{0<v_1<\cdots<v_k<1}\omega_{i_1}(\gamma'(v_1) \kappa'(u_1))\cdots\omega_{i_k}(\gamma'(v_k)\kappa'(u_k))\frac{dv_1\cdots dv_k}{\kappa'(u_1)\cdots \kappa'(u_k)}\\
    &=\int_{0<v_1<\cdots<v_k<1}\omega_{i_1}(\gamma'(v_1) \cdots\omega_{i_k}(\gamma'(v_k)) dv_1\cdots dv_k\\
    &=S_{i_1\cdots i_k}(\gamma).
    \end{align*}
   \end{proof}

\begin{proposition}\label{prop:left-invariance}
    The signature is left invariant under the $G$-action, that is, for any $\gamma\in \mathcal P(G)$ and $g\in G$, we have
    \[\mathbb S(\gamma)=\mathbb S(g\gamma),\]
    where $(g\gamma)(t):= L_g(\gamma(t)) = g\gamma(t)\in \mathcal P(G)$.
\end{proposition}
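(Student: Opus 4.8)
The plan is to reduce everything to the fact that the frame $\{e_1,\dots,e_n\}$ and its dual coframe $\{\omega_1,\dots,\omega_n\}$ were chosen to be left invariant, so that the integrand in each iterated integral defining $S_{i_1\cdots i_k}$ is untouched by applying $L_g$. Concretely, I would first record the two basic facts about the curve $g\gamma$: its velocity is $(g\gamma)'(t) = (L_g)_*\gamma'(t)$ by the chain rule, and left invariance of $\omega_i$ means precisely $(L_g^*\omega_i)|_h = \omega_i|_h$ for every $h \in G$, i.e. $\omega_i|_{gh}\big((L_g)_* v\big) = \omega_i|_h(v)$ for all $v \in T_hG$. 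Combining these with $h=\gamma(t)$, $v=\gamma'(t)$ gives
\[
\omega_i\big((g\gamma)'(t)\big) = \omega_i\big((L_g)_*\gamma'(t)\big) = \omega_i\big(\gamma'(t)\big), \qquad t\in[0,1],\ i=1,\dots,n.
\]
Substituting this identity into Definition \ref{def: signature of Lie group valued path} term by term yields $S_{i_1\cdots i_k}(g\gamma) = S_{i_1\cdots i_k}(\gamma)$ for all multi-indices, hence $S_k(g\gamma) = S_k(\gamma)$ for every $k$, and therefore $\mathbb S(g\gamma)=\mathbb S(\gamma)$.

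Alternatively — and this is the route I would actually write out, since it leverages machinery already in place — I would invoke Proposition \ref{prop: two signatures are same} and show that $\Phi(g\gamma) = \Phi(\gamma)$ as $\fg$-valued paths. Writing $\delta = g\gamma$, we have $\delta(t)^{-1} = \gamma(t)^{-1}g^{-1}$, so $L_{\delta(t)^{-1}} = L_{\gamma(t)^{-1}}\circ L_{g^{-1}}$, and thus
\[
\Phi(\delta)'(t) = (L_{\delta(t)^{-1}})_*\,\delta'(t) = (L_{\gamma(t)^{-1}})_*\,(L_{g^{-1}})_*\,(L_g)_*\,\gamma'(t) = (L_{\gamma(t)^{-1}})_*\,\gamma'(t) = \Phi(\gamma)'(t).
\]
Since $\Phi(g\gamma)$ and $\Phi(\gamma)$ both start at $0\in\fg$ and have equal derivatives, they coincide, and then $\mathbb S(g\gamma) = \mathbf S(\Phi(g\gamma)) = \mathbf S(\Phi(\gamma)) = \mathbb S(\gamma)$.

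There is essentially no obstacle here: the statement is a direct consequence of the left invariance built into the construction, and the only thing requiring (minor) care is bookkeeping of push-forwards and of the identity $(fg)^{-1} = g^{-1}f^{-1}$ in the group. I would present the second argument as the proof and perhaps remark that it is the curve-level shadow of the first, more computational, observation.
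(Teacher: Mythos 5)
Your proposal is correct, and your first argument is exactly the paper's proof: the paper computes $S_{i_1\cdots i_k}(g\gamma)$ from Definition~\ref{def: signature of Lie group valued path}, uses $(g\gamma)'(t)=(L_g)_*\gamma'(t)$ together with the left invariance of each $\omega_i$ to get $\omega_i((g\gamma)'(t))=\omega_i(\gamma'(t))$, and concludes componentwise. Your preferred second route---showing $\Phi(g\gamma)=\Phi(\gamma)$ via $L_{(g\gamma(t))^{-1}}=L_{\gamma(t)^{-1}}\circ L_{g^{-1}}$ and then invoking Proposition~\ref{prop: two signatures are same}---is also correct and is, as you say, the same computation lifted to the level of the Darboux derivative; it buys a slightly cleaner statement (the entire $\fg$-valued development is unchanged, not merely each iterated integral) at the cost of routing through the identification $\bbS(\gamma)=\bS(\Phi(\gamma))$, which the paper chooses not to rely on here. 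Either write-up is fine; no gaps.
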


\begin{proof}
    Pick any frame $\{e_1,\dots, e_n\}$ in $\mathfrak g$. It suffices to show the equality for each component.
    \begin{align*}
        S_{i_1\cdots i_k}(g\gamma)=\int_{0<u_1<...<u_k<1}\omega_{i_1}((g\gamma)'(u_1))\cdots\omega_{i_k}((g\gamma)'(u_k))du_1\cdots du_k.
    \end{align*}
    but since $(g\gamma)'(t) = g_*\gamma'(t) :=d(L_g)_{\gamma(t)}\gamma'(t)$ and each $\omega_i$ is left $G$-invariant, we have $\omega_i(g_*\gamma'(t))=\omega_i(\gamma'(t))$ for any $g\in G$ and $i\in \{1,\dots,n\}$. Thus it follows by definition that $S_{i_1\cdots i_k}(g\gamma)=S_{i_1\cdots i_k}(\gamma)$.
\end{proof}

We end this section with the following result of Chen \cite{Chen58}, although for the purpose of this paper we will not use it. Recall that for two (piecewise) smooth $G$-valued paths $\gamma_1$ and $\gamma_2$ in $\cP(G)$, their concatenation $\gamma = \gamma_1 * \gamma_2$ is a continuous and piecewise smooth $G$-valued path such that
$$
\gamma(t) = \begin{cases}
    \gamma_1(2t), \quad t \in [0,\frac{1}{2}];\\
    \gamma_1(1)\gamma_2(0)^{-1}\gamma_2(2t-1), \quad t \in [\frac{1}{2}, 1]. 
\end{cases}
$$
By Propositions \ref{prop:reparametrization} and \ref{prop:left-invariance}, we have $\bbS(\gamma)_{0,\frac{1}{2}} = \bbS(\gamma_1)$ and $\bbS(\gamma)_{\frac{1}{2},1} = \bbS(\gamma_2)$, where $\bbS(\gamma)_{s,t}$ denotes the signature of the segment $\gamma$ restricted on the time sub-interval $[s,t] \subset [0,1]$.

\begin{theorem}(Chen's identity \cite{Chen58})
If $\gamma$ is a concatenation of $\gamma_1 \in \cP(G)$ and $\gamma_2 \in \cP(G)$, then
\[\mathbb S(\gamma)=\bbS(\gamma)_{0,\frac{1}{2}} \otimes \bbS(\gamma)_{\frac{1}{2},1} = \mathbb S(\gamma_1)\otimes \mathbb S(\gamma_2).\]
\end{theorem}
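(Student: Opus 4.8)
The plan is to reduce Chen's identity on $\cP(G)$ to the classical Chen identity for vector-space-valued paths, using the correspondence $\bbS(\gamma) = \bS(\Phi(\gamma))$ from Proposition \ref{prop: two signatures are same}. Normalize the concatenation so that $\gamma(t) = \gamma_1(2t)$ for $t \in [0,\tfrac12]$ and $\gamma(t) = \gamma_2(2t-1)$ for $t \in [\tfrac12,1]$ (with $\gamma_1(1) = \gamma_2(0)$); by Proposition \ref{prop:reparametrization} this costs nothing. First I would compute $f_\gamma(t) = (L_{\gamma(t)^{-1}})_*\gamma'(t)$ piecewise: for $t < \tfrac12$ one has $\gamma(t) = \gamma_1(2t)$ and $\gamma'(t) = 2\gamma_1'(2t)$, so $f_\gamma(t) = 2 f_{\gamma_1}(2t)$, and similarly $f_\gamma(t) = 2 f_{\gamma_2}(2t-1)$ for $t > \tfrac12$. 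Integrating, $\Phi(\gamma)(t) = \Phi(\gamma_1)(2t)$ on $[0,\tfrac12]$ and $\Phi(\gamma)(t) = \Phi(\gamma_1)(1) + \Phi(\gamma_2)(2t-1)$ on $[\tfrac12,1]$. Hence $\Phi(\gamma)$ is, in the linear space $\fg$, precisely the concatenation of $\Phi(\gamma_1)$ with the shifted path $\Phi(\gamma_1)(1) + \Phi(\gamma_2)$.

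Next I would invoke the two standard invariances of the classical signature on $\fg \cong \R^n$: it is invariant under translation of the path (it depends only on the derivative $\Phi(\gamma)'$), so $\bS\big(\Phi(\gamma_1)(1) + \Phi(\gamma_2)\big) = \bS(\Phi(\gamma_2))$; and it satisfies the classical Chen identity $\bS(\eta_1 \ast \eta_2) = \bS(\eta_1)\otimes\bS(\eta_2)$ for concatenations of $C^1$ paths in $\R^n$ (see e.g. \cite{hambly2010uniqueness}, \cite{LyonsQian2007}). Applying these to $\eta_1 = \Phi(\gamma_1)$, $\eta_2 = \Phi(\gamma_1)(1)+\Phi(\gamma_2)$ gives $\bS(\Phi(\gamma)) = \bS(\Phi(\gamma_1)) \otimes \bS(\Phi(\gamma_2))$, and Proposition \ref{prop: two signatures are same} translates this back to $\bbS(\gamma) = \bbS(\gamma_1)\otimes\bbS(\gamma_2)$, as desired.

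Alternatively — and this is worth recording as a self-contained route — one can prove the identity directly at the level of iterated integrals by splitting the simplex. Writing $S_{i_1\cdots i_k}(\gamma) = \int_{0<u_1<\cdots<u_k<1}\prod_j \omega_{i_j}(\gamma'(u_j))\,du_1\cdots du_k$ and decomposing the domain according to the largest $m$ with $u_m \le \tfrac12$, one gets
\[
S_{i_1\cdots i_k}(\gamma) = \sum_{m=0}^k \Big(\int_{0<u_1<\cdots<u_m<\frac12}\textstyle\prod_{j\le m}\omega_{i_j}(\gamma'(u_j))\,du\Big)\Big(\int_{\frac12<u_{m+1}<\cdots<u_k<1}\textstyle\prod_{j>m}\omega_{i_j}(\gamma'(u_j))\,du\Big),
\]
and after rescaling each factor (again using Proposition \ref{prop:reparametrization}) the two bracketed integrals become $S_{i_1\cdots i_m}(\gamma_1)$ and $S_{i_{m+1}\cdots i_k}(\gamma_2)$; summing over $m$ and all indices is exactly the degree-$k$ component of $\bbS(\gamma_1)\otimes\bbS(\gamma_2)$.

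The argument involves no real obstacle; the only point requiring care is the bookkeeping in the first approach — tracking the factor of $2$ from the reparametrization and the additive shift $\Phi(\gamma_1)(1)$ in the second leg of $\Phi(\gamma)$ — and then citing the correct classical invariances (translation- and reparametrization-invariance, plus the Euclidean Chen identity). I would present the reduction via $\Phi$ as the main proof and mention the direct simplex-splitting argument as a remark for completeness.

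\begin{proof}
By Proposition \ref{prop:reparametrization} we may assume $\gamma|_{[0,1/2]}$ is a reparametrization of $\gamma_1$ and $\gamma|_{[1/2,1]}$ a reparametrization of $\gamma_2$, precisely $\gamma(t)=\gamma_1(2t)$ on $[0,\tfrac12]$ and $\gamma(t)=\gamma_2(2t-1)$ on $[\tfrac12,1]$, with $\gamma_1(1)=\gamma_2(0)$. For $t<\tfrac12$,
\[
f_\gamma(t) = (L_{\gamma(t)^{-1}})_*\gamma'(t) = (L_{\gamma_1(2t)^{-1}})_*\big(2\gamma_1'(2t)\big) = 2 f_{\gamma_1}(2t),
\]
and likewise $f_\gamma(t) = 2 f_{\gamma_2}(2t-1)$ for $t>\tfrac12$. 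Integrating from $0$,
\[
\Phi(\gamma)(t) = \begin{cases} \Phi(\gamma_1)(2t), & t\in[0,\tfrac12],\\[2pt] \Phi(\gamma_1)(1) + \Phi(\gamma_2)(2t-1), & t\in[\tfrac12,1]. \end{cases}
\]
Thus, as a $C^1$ path in the vector space $\fg\cong\R^n$, $\Phi(\gamma)$ is the concatenation of $\Phi(\gamma_1)$ with the translated path $\Phi(\gamma_1)(1)+\Phi(\gamma_2)$. The classical signature of paths in $\R^n$ is invariant under constant translation (it depends only on the derivative), so $\bS\big(\Phi(\gamma_1)(1)+\Phi(\gamma_2)\big)=\bS(\Phi(\gamma_2))$, and it obeys the Euclidean Chen identity $\bS(\eta_1\ast\eta_2)=\bS(\eta_1)\otimes\bS(\eta_2)$ for $C^1$ paths (see e.g. \cite{hambly2010uniqueness}, \cite{LyonsQian2007}). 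Applying these,
\[
\bS(\Phi(\gamma)) = \bS(\Phi(\gamma_1))\otimes\bS\big(\Phi(\gamma_1)(1)+\Phi(\gamma_2)\big) = \bS(\Phi(\gamma_1))\otimes\bS(\Phi(\gamma_2)).
\]
By Proposition \ref{prop: two signatures are same}, $\bbS(\gamma)=\bS(\Phi(\gamma))$, $\bbS(\gamma_1)=\bS(\Phi(\gamma_1))$ and $\bbS(\gamma_2)=\bS(\Phi(\gamma_2))$, hence $\bbS(\gamma)=\bbS(\gamma_1)\otimes\bbS(\gamma_2)$.
\end{proof}

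\begin{rem}
One may also verify the identity directly: decomposing the integration simplex $\{0<u_1<\cdots<u_k<1\}$ according to the number $m$ of indices with $u_j\le\tfrac12$ yields
\[
S_{i_1\cdots i_k}(\gamma) = \sum_{m=0}^k S_{i_1\cdots i_m}(\gamma|_{[0,1/2]})\, S_{i_{m+1}\cdots i_k}(\gamma|_{[1/2,1]}) = \sum_{m=0}^k S_{i_1\cdots i_m}(\gamma_1)\, S_{i_{m+1}\cdots i_k}(\gamma_2),
\]
where the last equality uses Proposition \ref{prop:reparametrization}; summing over indices recovers the degree-$k$ component of $\bbS(\gamma_1)\otimes\bbS(\gamma_2)$.
\end{rem}
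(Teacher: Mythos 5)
Your argument is correct. Note, however, that the paper itself gives no proof of this theorem: it is stated as a citation to Chen's 1958 work and is explicitly flagged as not being used elsewhere in the paper, so there is nothing to compare against line by line. What you supply is a genuine proof, and both of your routes are sound. The main route — computing $f_\gamma$ piecewise to show that $\Phi(\gamma)$ is the concatenation of $\Phi(\gamma_1)$ with the translate $\Phi(\gamma_1)(1)+\Phi(\gamma_2)$, then invoking translation invariance and the Euclidean Chen identity and pulling back through Proposition \ref{prop: two signatures are same} — is exactly in the spirit of how the paper handles every other property of $\bbS$ (boundedness, stability), namely by reduction to the linear theory via $\Phi$; the bookkeeping of the factor $2$ and the additive shift checks out. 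The simplex-splitting remark is the classical self-contained argument and is also correct. The only point worth a sentence in a final write-up is a regularity caveat: $\cP(G)$ is defined as the space of $C^1$ paths, and the concatenation of two $C^1$ paths is in general only piecewise $C^1$ at the junction, so one should either note that the iterated integrals (and the classical Chen identity) extend without change to piecewise $C^1$ paths, or reparametrize so that the derivatives vanish at the matching point. This is a gap in the theorem's statement as given in the paper rather than in your proof.
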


As before, let $\widetilde \cP(G) = \cP(G)/\sim_{tree}$ be the quotient space of $\cP(G)$ modulo the tree-like equivalence relation, then thanks to the above Chen's identity and the injectivity of $\bbS$ on $\widetilde \cP(G)$ (see Remark \ref{remark: important properties of signature of Lie group valued paths}), the signature mapping $\bbS: \widetilde \cP(G) \to T((\fg))$ is a faithful group homomorphism, where $\widetilde \cP(G)$ is equipped with the concatenation as the group operation and $T((\fg))$ is endowed with the tensor product as the group operation. The group $\widetilde \cP(G)$ equipped with the concatenation is often called the reduced path group in the signature literature.


\section{The average signature}\label{sec:average-signature}
For now we have defined for any compact connected Lie group $G$ a natural coordinate-free notion of signature map $\mathbb S$. This leads to the following natural question: 

\begin{question}\label{ques:A(G)}
What is the average signature of geodesic paths in $G$?
\end{question}

If we take a closer look at the question, we quickly realize that given any two elements $g, h\in G$, there might exist infinitely many geodesic paths connecting $g, h$, even if we restrict our attention to those length minimizing ones, the geodesic path might not be unique. However, the non-uniqueness can be characterized by the cut locus. We give a brief summary of this notion and refer to \cite[Chapter III]{Chavel06} for more details.

Given a complete Riemannian manifold $M$, any point $p\in M$, and any geodesic ray $\gamma$ emanating from $p$, we say that a point $q$ on $\gamma$ is a cut point of $p$ along $\gamma$, if along $\gamma$ the geodesic path is not minimizing in length after $q$. The collection of all cut points of $p$ is called the cut locus of $p$. The following two properties on the cut locus will be crucial for us:
\begin{enumerate}
    \item[$\bullet$] If $q$ is not in the cut locus of $p$, then there exists a unique length minimizing geodesic segment connecting $p$ and $q$.
    \item[$\bullet$] For any $p\in M$, the set of cut locus of $p$ is of measure zero with respect to the Riemannian volume measure induced by the given Riemannian metric.
\end{enumerate}

Now we return to the discussion of Question \ref{ques:A(G)}. Given any two elements $g, h\in G$, we denote a shortest $C^1$-smooth (constant speed) geodesic path $\gamma_{g,h} \in \Gamma_{g,h} \subset \Geod(G)$ in $G$ connecting $g$ and $h$. This geodesic path might not be unique when $h$ lies in the cut locus of $g$. However, for each fixed $g\in G$, the cut locus of $g$ is of $\mu$-measure zero where $\mu$ is the volume form induced by the given bi-invariant metric $\langle \cdot,\cdot \rangle_G$ on $G$, so there is a full measure set of $h\in G$ that connects to $g$ via a unique shortest geodesic path, i.e., $\Gamma_{g,h} = \{\gamma_{g,h}\}$ is a singleton. For this reason, we may write the geodesic selector $\GeodSel$ defined in Proposition \ref{prop: measurable selector} as $\GeodSel(g,h) = \gamma_{g,h}$. By Proposition \ref{prop: measurable selector}, the mapping $(g,h) \in G \times G \mapsto \bbS(\gamma_{g,h}) \in \cH$ is Borel measurable;  by Proposition \ref{prop: boundedness of signature}, the (Hilbert) norm of $\bbS(\gamma_{g,h})$ is uniformly bounded by a constant for all $(g,h) \in G \times G$. Therefore, given the volume measure $\mu$ on $G$, 
the following $\cH$-valued Bochner integral
\[\mathbb A(G)=\frac{1}{\mu(G)\times \mu(G)}\int_{G\times G} \mathbb S(\gamma_{g,h}) d\mu(g)d\mu(h)\]
is well defined.
Geometrically, it represents the average value of signature of a generic geodesic path on $G$.

Using the left invariance of the signature (Proposition \ref{prop:left-invariance}) and the left invariance of the Haar measure, we can simplify the above expression as follows:
\begin{align*}
    \mathbb A(G)&=\frac{1}{\mu(G)\times \mu(G)}\int_{G\times G} \mathbb S(\gamma_{g,h}) d\mu(g)d\mu(h)\\
    &=\frac{1}{\mu(G)\times \mu(G)}\int_{G\times G} \mathbb S(\gamma_{e,g^{-1}h}) d\mu(g)d\mu(h)\\
    &=\frac{1}{\mu(G)}\int_{G}\left(\frac{1}{\mu(G)}\int_G \mathbb S(\gamma_{e,g^{-1}h}) d\mu(h)\right)d\mu(g)\\
    &=\frac{1}{\mu(G)}\int_{G}\left(\frac{1}{\mu(G)}\int_G \mathbb S(\gamma_{e,g^{-1}h}) d\mu(g^{-1}h)\right)d\mu(g)\\
    &=\frac{1}{\mu(G)}\int_G \mathbb S(\gamma_g)d\mu(g),
\end{align*}
where $\gamma_g := \gamma_{e,g}$ is the shortest geodesic connecting $e$ to $g$ in $G$. (Note that $\gamma_g$ is well defined for almost every point $g\in G$ except possibly at the measure zero cut locus.)
This leads to the following definition.

\begin{definition}\label{def:average-signature} Let $G$ be a connected, compact Lie group with a Haar measure $\mu$. The average signature of $G$ is given by
\begin{equation}\label{def:E(G)}
\mathbb A(G):=\frac{1}{\mu(G)}\int_G \mathbb S(\gamma_g)d\mu(g),
\end{equation}
For convenience, we also write $\mathbb A(G)$ in terms of the formal power series
\[\mathbb A(G)=\sum_{k=0}^\infty A_k(G),\]
where 
\[A_k(G)=\frac{1}{\mu(G)}\int_G S_k(\gamma_g)d\mu(g).\]
When the context is clear, we abbreviate $A_k(G)$ with $A_k$.
\end{definition}

 Since the Lie group $G$ is compact, the volume form $\mu$ induced by the given bi-invariant metric $\langle \cdot, \cdot \rangle_G$ is a finite Haar measure on $G$ (see e.g. \cite[Theorem 1.46]{CompactLieGroups}), which guarantees that $\mathbb A(G) = \frac{1}{\mu(G)}\int_G \mathbb S(\gamma_g)d\mu(g)$ as a Bochner integral against a finite measure is well-defined (recall that by  Proposition \ref{prop: boundedness of signature}, $\|\mathbb S(\gamma_g)\|_{\cH}$ is uniformly bounded for all geodesics $\gamma_g$).
Moreover, it is well known (see e.g. \cite[Lemma 1.44]{CompactLieGroups}) that $\mu$ is the unique Haar measure on $G$ up to a nonzero scalar, that is, if $\tilde \mu$ is another (nonzero) Haar measure on $G$, then there exists a nonzero constant $c$ such that $\mu = c \tilde \mu$. In particular, if $\widetilde{\langle \cdot, \cdot \rangle_G}$ is another 
bi-invariant metric on $G$ and $\tilde \mu$ is the attached volume form (which is a Haar measure on $G$), then we have $\mu = c \tilde \mu$ for some nonzero constant $c$.

On the other hand, given any bi-invariant metric on a connected compact Lie group $G$, the Riemannian exponential map on $G$ with respect to the bi-invariant metric coincides with the surjective exponential map $\exp:\fg \to G$ of the Lie group $G$ (\cite[Theorem 2.27]{LieGroupsandGeometricAspects}). In particular, for any $g\in G$ outside a $\mu$-measure zero cut locus of $e$, there exists a unique $v(g)\in \mathfrak g$ which is independent of the choice of the bi-invariant metric such that the length minimizing geodesic segment connecting $e$ to $g$ is exactly an interval of the one-parameter family $t\mapsto\exp(tv(g)), t\in[0,1]$. 
Moreover, the assignment $g\mapsto v(g)$ can be viewed as an ``almost'' inverse map of $\exp$, and it is only defined on some full $\mu$-measure set $G_0\subset G$. Note that for any $g \in G_0$, since $\gamma_g(t)=\GeodSel(e,g)(t) = \exp(tv(g))$, it implies that the $\fg$-valued path $\Phi(\gamma_g)$ fulfills that $\Phi(\gamma_g)^\prime(t) = (L_{\gamma_g(t)^{-1}})_*(\gamma_g^\prime(t)) = (L_{\gamma_g(t)^{-1}})_* (L_{\gamma_g(t)})_*v(g) = v(g)$ for all $t \in [0,1]$, i.e., $\Phi(\gamma_g)(t) = t v(g)$ is a straight line segment from $0 \in \fg$ to $v(g)$. Hence, we have
$$
v(g) = \Phi(\GeodSel(e,g))^\prime(t)
$$
for all $t \in [0,1]$, so that by the measurability of the function $\GeodSel$ (see Proposition \ref{prop: measurable selector}), one can easily check that the mapping $v(\cdot): G_0 \to \fg$ is a Borel measurable function on $G_0$. By setting $v(h) = 0 \in \fg$ for all $h \in G\setminus G_0$, we can and will view $v(\cdot): G \to \fg$ as a measurable $\fg$-valued function defined  on the whole Lie group $G$.

In fact, using this map $v:G_0\rightarrow \mathfrak g$, we obtain the following observations for the average signature:  since $\gamma_g(t) = tv(g)$ for $g \in G_0$ and $t \in [0,1]$, using Proposition \ref{prop: two signatures are same}, it holds that 
\begin{equation}\label{eq:sig-simple}
    \bbS(\gamma_g) = \bS(\Phi(\gamma_g)) = \exp_{\otimes}(v(g)) = \sum_{k=0}^\infty \frac{v(g)^{\otimes k}}{k!},
\end{equation}
where $\exp_{\otimes}: \fg \to T((\fg))$ denotes the tensorial exponential mapping. Thus, by writing $\bar \mu = \frac{\mu}{\mu(G)}$ as the unique Haar probability measure on $G$, we have

\begin{equation}\label{eq:ave-sig-simple}
    \mathbb A(G)=\frac{1}{\mu(G)}\int_G \mathbb \exp_{\otimes}(v(g))d\mu(g) = \int_G \exp_{\otimes}(v(g))d\bar \mu(g),
\end{equation}
is nothing else but the moment generating function of the random variable $v(\cdot)$ on $G$ with respect to $\bar \mu$; or equivalently speaking, the expected signature of the pushforward probability measure $(g \mapsto \GeodSel(e,g))_\sharp\bar \mu$ on $\cP(G)$: $\mathbb A(G)= \mathbb E_{\bar \mu}[\bbS (\GeodSel(e,\cdot) ) ]$.
Similarly, for any integer $k \ge 1$,
\begin{equation}\label{eq: expression of Ak}
     A_k(G)=\frac{1}{\mu(G)}\int_G \frac{v(g)^{\otimes k}}{k!}d\mu(g) = \int_G \frac{v(g)^{\otimes k}}{k!}d\bar \mu(g)
\end{equation}
is the $k$-th moment of the $\fg$-valued random variable $v(\cdot)$ against $\bar \mu$. We refer readers to \cite{Chevyrev2016chf} and \cite{Chevyrev2022signaturekernel} for a comprehensive exploration to the theory of expected signature.

 Combining all above observations, we can conclude that $\mathbb A(G)$ does not depend on the choice of the bi-invariant metric, and the notion of average signature $\mathbb A(G)$ is a natural invariant associated to any compact connected Lie group $G$. The rest of the paper is devoted to the study of this invariant.

\begin{remark}\label{remark: average signature is symmetric}
\begin{enumerate}
    \item In view of \eqref{eq: expression of Ak}, it is clear that for any integer $k \ge 1$, the $k$-th component of the average signature $\mathbb A(G)$, $A_k(G) = \int_G \frac{v(g)^{\otimes k}}{k!}d\bar \mu(g)$ actually takes values in the symmetric tensor space $\mathcal S_k(\fg) \subset (\fg)^{\otimes k}$, where $\mathcal S_k(\fg) = \{\mathbf{v} \in (\fg)^{\otimes k}: \forall \sigma \in S_k, \mathbf{v}^\sigma = \mathbf{v} \}$, $S_k$ is the permutation group of $\{1,\ldots,k\}$ and $\mathbf{v} \mapsto \mathbf{v}^{\sigma}$ is a the linear mapping such that $\mathbf{v}^{\sigma} = v_{\sigma(1)} \otimes \ldots \otimes v_{\sigma(k)}$ if $\mathbf{v} = v_1 \otimes \ldots \otimes v_k$ with $v_i \in \fg$ for $i=1,\ldots,k$, see \cite{SmoothmanifoldsLee} for more details.  This symmetric property of average signature $\mathbb A(G)$ will play an important rule in the recovery of the geometric information, see the next section.
    \item As we shall see later, the fact that the geodesics on the compact Lie group are essentially straight lines in its Lie algebra is indeed crucial for us to use the average signature to recover certain geometric properties of the underlying Lie group, and it explains why the compact Lie group setting is tractable and
    distinguishes it from general Riemannian manifolds. For example, the (average) signature of geodesics on general Riemannian manifolds may not be a symmetric tensor anymore, so that the notion of the trace spectrum (which will be introduced in the next section) cannot be easily generalized to the average signature on general Riemannian manifolds.
\end{enumerate}

\end{remark}

\begin{example}\label{ex:circle}
$G=S^1$.\\

We identify $G$ with the standard unit circle, parametrized by $\theta\in (-\pi,\pi]$ where $\theta=0$ corresponds to the identity element. Thus $\mathfrak g$ is a one dimensional Lie algebra generated by $e_1=(0,1)$, that is dual to the one-form $d\theta$. The normalized Haar measure is given by $d\mu=\frac{1}{2\pi}d\theta$. For any $\theta\in G$ such that $\theta\neq \pi$, there is a unique geodesic $\gamma_\theta(t)$ connecting the identity $e$ to $\theta$ in $G$. Now we can compute $\mathbb A(G)$ as follows.

We write $\mathbb A(G)=(A_0, A_1,\dots, A_n,\dots)$ where
\[A_n=\frac{1}{2\pi}\int_{-\pi}^\pi S_n(\gamma_\theta)d\theta.\]
Since 
\begin{align*}
    S_n(\gamma_\theta)&=\begin{cases}
        \left(\int_{0<u_1<\dots<u_n<\theta} du_1\dots du_n \right)e_1^{\otimes n}& \theta\in[0,\pi)\\
        \left(\int_{0<u_1<\dots<u_n<|\theta|} (-1)^n du_1\dots du_n\right)e_1^{\otimes n}& \theta\in(-\pi,0)
    \end{cases}\\
    &=\frac{\theta^n}{n!}e_1^{\otimes n},\quad \theta\in (-\pi, \pi)
\end{align*}
We have
\begin{align*}
    A_n&=\frac{1}{2\pi}\left(\int_{-\pi}^\pi \frac{\theta^n}{n!}d\theta \right)e_1^{\otimes n}\\
    &=\begin{cases}
      \frac{\pi^n}{(n+1)!}e_1^{\otimes n}  & n \textrm{ is even}\\
     0   & n \textrm{ is odd}
    \end{cases}  
\end{align*}
Thus we can simplify the formal power series as
\begin{align*}
    \mathbb A(S^1)=\sum_{k=0}^\infty \frac{(\pi e_1)^{\otimes 2k}}{(2k+1)!}=\frac{\sinh (\pi e_1)}{\pi e_1}.
\end{align*}

\end{example}

\begin{example}\label{ex:3sphere}
    $G=\SU(2)$.\\
    
For the purpose of computation, it is convenient to pull the integral \eqref{def:E(G)} back to the Lie algebra $\mathfrak g\cong \su(2)$ via the exponential map. We choose a standard basis $\{e_1, e_2, e_3\}$ on $\su(2)$ such that
\[e_1=\left(\begin{matrix}
i & 0 \\
0 & -i 
\end{matrix}\right),
e_2=\left(\begin{matrix}
0 & 1 \\
-1 & 0 
\end{matrix}\right),
e_3=\left(\begin{matrix}
0 & i \\
i & 0 
\end{matrix}\right),
\]
with the Lie bracket relations given by $[e_1,e_2]=2e_3, [e_1,e_3]=-2e_2, [e_2,e_3]=2e_1$. We identify $\mathfrak g$ with $\mathbb R^3$ under such basis. Note that the exponential map $\exp: \su(2)\cong \mathbb R^3 \rightarrow G$ is a homeomorphism when restricted to the open ball $B(\pi):=\{(\lambda_1,\lambda_2,\lambda_3):\lambda_1^2+\lambda_2^2+\lambda_3^2<\pi^2\}$, whose image is exactly $G\setminus \{-I\}$ in $G$. Now pulling back the integral on \eqref{def:E(G)} gives
\begin{align*}
\mathbb A(G)&=\frac{1}{\mu(G)}\int_G \mathbb S(\gamma_g)d\mu(g)\\
&=\frac{1}{\mu(G)}\int_{B(\pi)} \mathbb S(\lambda_1,\lambda_2,\lambda_3) \cdot |\Jac_{(\lambda_1,\lambda_2,\lambda_3)}(\exp)|d\lambda_1d\lambda_2d\lambda_3,
\end{align*}
where $\mathbb S(\lambda_1,\lambda_2,\lambda_3)=\mathbb S(\gamma_g)$ for which $g=\exp(\lambda_1e_1+\lambda_2e_2+\lambda_3e_3)$.

We now compute the Jacobian of the exponential map. For any $C^1$-curve $A(t)\in\mathfrak g$ such that $A(0)=A$, according to Duhamel's formula \cite[Section 1.2, Theorem 5]{Rossmann02},
\[\frac{d}{dt}\Bigg|_{t=0}\exp(A(0))^{-1}\exp(A(t))=\frac{1-e^{-\ad_A}}{\ad_A}A'(0),\]
where $\ad_A: \fg \to \fg, \ad_A(B)=[A,B]$ (where $[A,B] = AB - BA$ is the natural Lie bracket on matrix Lie algebras) is the adjoint action on $\mathfrak g$, and
\[\frac{1-e^{-\ad_A}}{\ad_A}=\sum_{k=0}^\infty \frac{(-1)^k}{(k+1)!}(\ad_A)^k\]
is the corresponding formal power series of the operator $\ad_A$ on $\mathfrak g$. This shows that 
\[\Jac_A(\exp)=\det \left(\frac{1-e^{-\ad_A}}{\ad_A}\right).\]
When $A=(\lambda_1,\lambda_2,\lambda_3)$, we can compute explicitly under the basis $\{e_1,e_2,e_3\}$ that
\[\ad_A=2\left(\begin{matrix}
    0 & -\lambda_3 & \lambda_2\\
    \lambda_3 & 0 & -\lambda_1\\
    -\lambda_2 & \lambda_1 & 0
\end{matrix}\right)\]
Since $\ad_A$ has three eigenvalues $0, \pm 2\mu$ where $\mu=\sqrt{\lambda_1^2+\lambda_2^2+\lambda_3^2}\cdot i$, we see that 
\[\Jac_A(\exp)=1\cdot \frac{1-e^{-2\mu}}{2\mu}\cdot \frac{1-e^{2\mu}}{-2\mu}=\frac{\sin^2 r}{r^2},\]
where $r=\sqrt{\lambda_1^2+\lambda_2^2+\lambda_3^2}$.

Next we compute $\mathbb S(\lambda_1,\lambda_2,\lambda_3)$. For any $g\in G\setminus \{-I\}$, the unique geodesic connecting $I$ to $g$ in $G$ is given by $\gamma_g(t)=\exp(t\cdot v(g))$ where $v(g)$ is the unit vector in the direction of $\exp^{-1}(g)$ in $\mathfrak g$.
Adopting the same notation that $g=\exp(\lambda_1e_1+\lambda_2e_2+\lambda_3e_3)$ and $r=\sqrt{\lambda_1^2+\lambda_2^2+\lambda_3^2}$, we have
$\gamma_g'(t)=v(g)=\frac{1}{r}(\lambda_1e_1+\lambda_2e_2+\lambda_3e_3)$. Thus, it follows from \eqref{eq:sig-simple} that
\[\mathbb S(\lambda_1,\lambda_2,\lambda_3)=\sum_{n=0}^\infty\frac{(\lambda_1e_1+\lambda_2e_2+\lambda_3e_3)^{\otimes n}}{n!}=e^{(\lambda_1e_1+\lambda_2e_2+\lambda_3e_3)}.\]

Finally we compute $\mathbb A(G)$ using the spherical coordinate. Combining the above we obtain,
\begin{align*}
    \mathbb A(G)&=\frac{1}{\mu(G)}\int_{B(\pi)} \mathbb S(\lambda_1,\lambda_2,\lambda_3) \cdot |\Jac_{(\lambda_1,\lambda_2,\lambda_3)}(\exp)|d\lambda_1d\lambda_2d\lambda_3\\
    &=\frac{1}{\vol(S^3)}\int_0^\pi r^2 dr\int_{S^2}\sum_{n=0}^\infty \frac{(r\cdot v)^{\otimes n}}{n!}\cdot \frac{\sin^2 r}{ r^2} dS^2(v)\\
    &=\frac{1}{2\pi^2}\sum_{n=0}^\infty \left(I_n\cdot \int_{S^2}\frac{v^{\otimes n}}{n!}dS^2(v)\right),
\end{align*}
where $I_n=\int_0^\pi (r^n\sin^2 r) dr$ satisfies the recursive relations $I_0=\frac{\pi}{2}$ and $I_n=\frac{n(n-1)}{4}I_{n-2}+\frac{\pi^{n+1}}{2(n+1)}$. We also note that since the spherical measure $dS^2$ is flip ($v\mapsto -v$) invariant, the odd degree terms in $\mathbb A(G)$ all vanish.
\end{example}

\begin{example}\label{ex:torus}
    $G=T^2=S^1\times S^1$.\\
    
This example illustrates the case where $G$ is not simple, and consequently the bi-invariant metric on $G$ is not unique upto scale. We give the natural parameterization $(\theta_1,\theta_2)$ on $G$ where $\theta_i\in(-\pi,\pi]$ is the rotational angle on each factors. Similar to the above Example \ref{ex:circle}, we denote $e_i$ the tangent vector fields dual to $d\theta_i$, then the Lie algebra $\mathfrak g$ is a two dimensional abelian algebra spanned by $\{e_1, e_2\}$. Fix any bi-invariant metric on $G$, for each $\theta=(\theta_1,\theta_2)\in (-\pi,\pi)\times (-\pi,\pi)$ there is a unique length minimizing geodesic $\gamma_\theta$ on $G$ connecting $(0,0)$ to $(\theta_1,\theta_2)$ given by the line segment. Since the signature does not depend on the parameterization, we have
\begin{align*}
    S_n(\gamma_\theta)&=\frac{(\theta_1e_1+\theta_2e_2)^{\otimes n}}{n!}\\
    &=\sum_{\mathcal I=(i_1,\cdots,i_n)\in\{1,2\}^n}\frac{\theta_1^{k(\mathcal I)}\cdot \theta_2^{\ell(\mathcal I)}}{n!}e_{i_1}\otimes\cdots\otimes e_{i_n},
\end{align*}
where $k(\mathcal I), \ell(\mathcal I)$ denote the total number of $1$s and $2$s appearing in the multi-index $\mathcal I=(i_1,\cdots,i_n)$, respectively. By taking the average, we obtain that
\begin{align*}
    A_n&=\frac{1}{4\pi^2}\iint_{(-\pi,\pi)\times(-\pi,\pi)}S_n(\gamma_\theta)d\theta_1d\theta_2\\
    &=\frac{1}{4\pi^2 n!}\sum_{\mathcal I=(i_1,\cdots,i_n)\in\{1,2\}^n}\frac{\left(\left.\theta_1^{k(\mathcal I)+1}\right\vert_{-\pi}^\pi\right)\left(\left.\theta_2^{\ell(\mathcal I)+1}\right\vert_{-\pi}^\pi\right)}{(k(\mathcal I)+1)(\ell(\mathcal I)+1)}e_{i_1}\otimes\cdots\otimes e_{i_n}\\
    &=\frac{\pi^n}{n!}\sum_{\substack{\mathcal I=(i_1,\cdots,i_n)\in\{1,2\}^n\\ k(\mathcal I),\ell(\mathcal I) \textrm{ are even}}}\frac{1}{(k(\mathcal I)+1)(\ell(\mathcal I)+1)}e_{i_1}\otimes\cdots\otimes e_{i_n}.
\end{align*}
Note that $\mathbb A(T^2)=\sum_{n=1}^\infty A_n$ does not depend on the choice of bi-invariant metric on $G$.
\end{example}

To conclude this section, we show a crucial property of the average signature of $G$ which will be used heavily later on, namely as a series in $T((\fg)) = \prod_{k=0}^\infty \fg^{\otimes k}$, all of its odd degree components vanish.
\begin{proposition}\label{prop:even-degree}
For any connected, compact Lie group $G$, the average signature $\mathbb A(G)$ consists of only even degree terms, that is, $A_k=0$ whenever $k$ is odd.
\end{proposition}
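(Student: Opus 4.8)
The plan is to exploit the inversion symmetry $g \mapsto g^{-1}$ of $G$ together with the explicit form of the signature of a geodesic emanating from the identity. First I would recall the standard fact that, for a bi-invariant metric, the geodesics through $e$ are precisely the one-parameter subgroups: for $\mu$-almost every $g \in G$ (namely those $g$ outside the cut locus of $e$, which is closed and $\mu$-null), the unique length-minimizing constant-speed geodesic $\gamma_g$ from $e$ to $g$ is, up to reparametrization, $t \mapsto \exp(tX_g)$ where $X_g \in \fg$ is the unique vector with $\exp X_g = g$ and $\|X_g\|_\fg = d(e,g)$. For such a $\gamma_g$ one has $\Phi(\gamma_g)(t) = tX_g$, the straight segment from $0$ to $X_g$ in $\fg$, so by Proposition \ref{prop: two signatures are same} and the classical formula for the signature of a linear path (exactly the computation already carried out in Example \ref{ex:3sphere}),
\[
S_k(\gamma_g) = \frac{X_g^{\otimes k}}{k!} \qquad \text{for } \mu\text{-a.e. } g \in G ,
\]
whence $A_k = \dfrac{1}{k!\,\mu(G)}\displaystyle\int_G X_g^{\otimes k}\, d\mu(g)$.

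Next I would use the inversion map $\iota\colon G \to G$, $\iota(g) = g^{-1}$. Since $G$ carries a bi-invariant metric, $\iota$ is an isometry of $(G,d)$ fixing $e$ (its differential at $e$ is $-\mathrm{id}_\fg$); hence it maps the cut locus of $e$ bijectively onto itself and carries the minimal geodesic from $e$ to $g$ onto the minimal geodesic from $e$ to $g^{-1}$. In terms of the vectors $X_g$ this says $X_{g^{-1}} = -X_g$ for $\mu$-a.e.\ $g$, because $t \mapsto \gamma_g(t)^{-1} = \exp(-tX_g)$ is precisely the minimal geodesic from $e$ to $g^{-1}$. Moreover $\iota$ preserves the Haar measure $\mu$ ($G$ compact, hence unimodular; or simply: $\iota$ is an isometry, so preserves the Riemannian volume form). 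Changing variables $g \mapsto g^{-1}$ therefore gives
\[
A_k = \frac{1}{k!\,\mu(G)}\int_G X_g^{\otimes k}\, d\mu(g) = \frac{1}{k!\,\mu(G)}\int_G X_{g^{-1}}^{\otimes k}\, d\mu(g) = \frac{(-1)^k}{k!\,\mu(G)}\int_G X_g^{\otimes k}\, d\mu(g) = (-1)^k A_k ,
\]
so $A_k = 0$ whenever $k$ is odd, which is the claim.

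The argument is essentially bookkeeping, and the points needing care are measure-theoretic: one must ensure that the pointwise identities $\gamma_g(t) = \exp(tX_g)$, $S_k(\gamma_g) = X_g^{\otimes k}/k!$ and $X_{g^{-1}} = -X_g$ all hold simultaneously on one common $\mu$-full-measure set (this follows since the cut locus of $e$ is closed, $\mu$-null, and $\iota$-invariant), and that $g \mapsto X_g$ is measurable so the Bochner integrals are legitimate — the latter being already covered by Propositions \ref{prop: measurable selector} and \ref{prop: boundedness of signature}. I expect the only (mild) obstacle to be invoking the structure theory cleanly: that geodesics through $e$ are one-parameter subgroups and that the cut locus is null are standard facts that should be quoted precisely rather than reproved. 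A slightly more algebraic alternative avoiding the explicit formula for $S_k(\gamma_g)$ would combine the left-invariance of the signature (Proposition \ref{prop:left-invariance}) with the classical time-reversal identity $\mathbb S(\overleftarrow{\gamma}) = \mathbb S(\gamma)^{-1}$ applied to $\gamma_{g^{-1}}(t) = g^{-1}\gamma_g(1-t)$; but since the tensor-algebra inverse acts on each degree $k$ by $(-1)^k$ together with a word reversal, one would still need the invariance of $X_g^{\otimes k}$ under word reversal to finish, so the direct computation above is the cleanest route.
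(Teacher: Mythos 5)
Your proposal is correct and follows essentially the same route as the paper: both arguments rest on the identity $S_k(\gamma_g)=v(g)^{\otimes k}/k!$ (coming from the fact that minimizing geodesics from $e$ are one-parameter subgroups, so $\Phi(\gamma_g)$ is a straight segment), the relation $v(g^{-1})=-v(g)$, and the invariance of the Haar integral under $g\mapsto g^{-1}$. The paper phrases the conclusion as $A_k=\frac{1}{2\mu(G)}\int_G\bigl(S_k(\gamma_g)+S_k(\gamma_{g^{-1}})\bigr)d\mu(g)$ with vanishing integrand for odd $k$, whereas you write $A_k=(-1)^kA_k$; these are the same symmetrization argument.
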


\begin{proof}
First, since the integration of functions on compact Lie groups with respect to Haar measures is invariant under the inversion $g \in G \mapsto g^{-1} \in G$, see e.g.  \cite[Theorem 1.46]{CompactLieGroups}, we have
$$
\mathbb{A}(G) = \frac{1}{\mu(G)}\int_G \bbS(\gamma_{g})d\mu(g) = \frac{1}{\mu(G)}\int_G \bbS(\gamma_{g^{-1}})d\mu(g),
$$
which implies that for every $k\ge 0$,
\begin{align*}
    A_k = \frac{1}{2\mu(G)}\int_G \left(S_k(\gamma_g)+S_k(\gamma_{g^{-1}})\right)d\mu(g).
\end{align*} 
Now we compute the $k$-th components of signatures $S_k(\gamma_g)$ and $S_k(\gamma_{g^{-1}})$ for any $k \ge 0$ and any $g \in G$. Since $G$ is a connected compact Lie group equipped with a bi-invariant metric, by \cite[Theorem 2.27]{LieGroupsandGeometricAspects} we know that the exponential map $\exp: \fg \to G$ is surjective, see also the comments between Definition \ref{def:average-signature} and Example \ref{ex:circle}. 
Furthermore, since the inverse map $g\mapsto g^{-1}$ is an isometry under any bi-invariant metric, the geodesic segment $\exp(tv(g)),t\in[0,1]$ connecting $e$ to $g$ is length minimizing if and only if $\exp(-tv(g)),t\in[0,1]$ is a length minimizing geodesic connecting $e$ to $g^{-1}$. This shows that $\bbS(\gamma_{g^{-1}}) = \bS(\Phi(\gamma_{g^{-1}})) = \exp_{\otimes}(-v(g)) = \sum_{k=0}^\infty \frac{(-v(g))^{\otimes k}}{k !}$. So, to summarize, we have 
\[S_k(\gamma_g)=\frac{v(g)^{\otimes k}}{k!},\textrm{ and } S_k(\gamma_{g^{-1}})=\frac{(-v(g))^{\otimes k}}{k!}.\]
It follows immediately that  $S_k(\gamma_g)+S_k(\gamma_{g^{-1}})=0$ whenever $k$ is odd, and thus $A_k = \frac{1}{2\mu(G)}\int_G \left(S_k(\gamma_g)+S_k(\gamma_{g^{-1}})\right)d\mu(g)=0$ for all odd $k$, which completes the proof of the proposition.
\end{proof}

\section{Trace spectrum of the average signature}\label{sec:tr}
As we see in Example \ref{ex:circle}, \ref{ex:3sphere}, \ref{ex:torus}, the average signature should naturally contain information of the underlying Lie group, but it is very hard to compute in reality, partly because it values in a huge tensor algebra of the corresponding Lie algebra. On the other hand, it is  invariant under the choice of the underlying bi-invariant Riemannian metric on $G$; hence the average signature itself does not recover any geometric information of the Lie group. For this reason, it is important for us to combine the average signature with  an operation induced by the bi-invariant Riemannian metric put on $G$ in the task of recovering geometric features of $G$.  As we will see later, such an operation is given by taking the trace of average signature with respect to the given bi-invariant metric. By imposing such additional information provided by the trace operator, we can now extract certain scalar information by taking the tensor contractions, and eventually recover certain geometric invariants of the underlying Lie group.

Let $\mathfrak g$ be equipped with the the inner product which arises as 
the restriction of a fixed bi-invariant metric $\langle \cdot, \cdot \rangle_G$ to $\fg \cong T_eG$, then the dual space $\mathfrak g^*$ is naturally identified with $\mathfrak g$. We now consider a tensor contraction linear functional on $\mathfrak g^{\otimes 2k}$  with respect to $\langle \cdot, \cdot \rangle_G$ for any $k \ge 1$, which can be viewed as a higher order version of matrix trace.

\begin{definition}
    Let $k\ge 1$ be an integer. For any $A\in \mathfrak g^{\otimes 2k}$, the trace is given by pairwise taking the contractions on each $(2i-1,2i)$ coordinates, for $1\leq i\leq k$. More precisely, if $\{e_1,\dots,e_{n}\}$ is an orthonormal basis of $\mathfrak g$ with respect to $\langle \cdot, \cdot \rangle_G$, and we write
    \[A=\sum_{1\leq i_1,\dots,i_{2k}\leq n} a_{i_1\cdots i_{2k}}e_{i_1}\otimes \cdots\otimes e_{i_{2k}},\]
    then
    \[\tr(A):=\sum_{1\leq i_1,\dots,i_{2k}\leq n}a_{i_1\cdots i_{2k}}\delta_{i_1i_2}\cdots\delta_{i_{2k-1}i_{2k}}.\]
One easily verifies that the definition is independent of the choice of the orthonormal basis on $\mathfrak g$ (but it does depend on the choice of the inner product/Riemannian metric on $\fg$). For simplicity, we extend the definition to tensors of odd rank by declaring the trace is always zero.
\end{definition}

It is clear that the trace is a linear operator, that is, for any $A, B\in \mathfrak g^{\otimes 2k}$, and any $\lambda\in \mathbb R$, we have
$\tr(A+B)=\tr(A)+\tr(B)$ and $\tr(\lambda A)=\lambda\tr(A)$. Moreover, we have the following simple relation between the trace on $\mathfrak g^{\otimes 2k}$ and the inner product on $\mathfrak g$.
\begin{lemma}\label{lem:tr}
    If $A=u_1\otimes v_1\otimes \cdots\otimes u_k\otimes v_k \in \mathfrak g^{\otimes 2k}$, where $u_i,v_i\in \mathfrak g$ for each $1\leq i\leq n$, then
    \[\tr (A) =\prod_{i=1}^k \langle u_i,v_i\rangle.\]
\end{lemma}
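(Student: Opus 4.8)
The plan is to verify the claimed identity $\tr(u_1\otimes v_1\otimes\cdots\otimes u_k\otimes v_k)=\prod_{i=1}^k\langle u_i,v_i\rangle$ by direct computation in coordinates, using the definition of $\tr$ given above. Fix an orthonormal basis $\{e_1,\dots,e_n\}$ of $V$ and expand each vector in that basis: write $u_i=\sum_{p=1}^n u_i^p e_p$ and $v_i=\sum_{q=1}^n v_i^q e_q$, so that the scalar components are $u_i^p=\langle u_i,e_p\rangle$ and $v_i^q=\langle v_i,e_q\rangle$.

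Expanding the tensor product, the component of $A=u_1\otimes v_1\otimes\cdots\otimes u_k\otimes v_k$ along $e_{i_1}\otimes\cdots\otimes e_{i_{2k}}$ is exactly $a_{i_1\cdots i_{2k}}=u_1^{i_1}v_1^{i_2}u_2^{i_3}v_2^{i_4}\cdots u_k^{i_{2k-1}}v_k^{i_{2k}}$. Plugging this into the definition of the trace, $\tr(A)=\sum_{i_1,\dots,i_{2k}}a_{i_1\cdots i_{2k}}\delta_{i_1i_2}\delta_{i_3i_4}\cdots\delta_{i_{2k-1}i_{2k}}$, the Kronecker deltas force $i_2=i_1$, $i_4=i_3$, and so on, so the sum collapses to a sum over $k$ free indices $j_1,\dots,j_k$ (with $j_\ell:=i_{2\ell-1}=i_{2\ell}$):
\[
\tr(A)=\sum_{j_1,\dots,j_k=1}^n u_1^{j_1}v_1^{j_1}\,u_2^{j_2}v_2^{j_2}\cdots u_k^{j_k}v_k^{j_k}=\prod_{\ell=1}^k\Bigl(\sum_{j=1}^n u_\ell^j v_\ell^j\Bigr).
\]
Finally, since $\{e_j\}$ is orthonormal, $\sum_{j=1}^n u_\ell^j v_\ell^j=\langle u_\ell,v_\ell\rangle$, giving $\tr(A)=\prod_{\ell=1}^k\langle u_\ell,v_\ell\rangle$ as claimed. (I note in passing that the paper's statement writes $\prod_{i=1}^n$, but the product is genuinely over the $k$ tensor slots, i.e. $\prod_{i=1}^k$; this is just a harmless typo.)

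There is essentially no obstacle here — the proof is a bookkeeping exercise in matching indices. The only point worth a sentence of care is the factorization step, where the multi-index sum over $(j_1,\dots,j_k)$ separates into a product of $k$ independent single-index sums; this is valid precisely because the summand is a product of factors each depending on only one $j_\ell$. One could alternatively phrase the whole argument more invariantly by observing that $\tr$ is, by construction, the $k$-fold tensor product of the canonical contraction $V\otimes V\to\R$ (which sends $u\otimes v\mapsto\langle u,v\rangle$ under the identification $V^*\cong V$), and that such a contraction applied slotwise to a product of rank-one tensors multiplies the slotwise pairings; but the coordinate computation above is the most transparent route given how $\tr$ was defined.
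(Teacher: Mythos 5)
Your proof is correct and is essentially identical to the paper's: both expand $u_i,v_i$ in an orthonormal basis, apply the definition of the trace so that the Kronecker deltas collapse the $2k$ indices to $k$, and factor the resulting sum into a product of inner products. Your observation that $\prod_{i=1}^{n}$ should read $\prod_{i=1}^{k}$ is a correct catch of a typo in the statement.
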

\begin{proof}
    Choose an orthonormal basis $\{e_1,\dots,e_n\}$ of $\mathfrak g$. Let $u_i=\sum_{j=1}^n a_{ij} e_{j}$ and $v_i=\sum_{j=1}^n b_{ij} e_j$. Then we compute $\tr A$ in the following,
    \begin{align*}
        &\tr\left[ \left(\sum_{j_1=1}^n a_{1j_1} e_{j_1}\right)\otimes \left(\sum_{l_1=1}^n b_{1l_1} e_{l_1}\right)\otimes\cdots\otimes \left(\sum_{j_k=1}^n a_{kj_k} e_{j_k}\right)\otimes \left(\sum_{l_k=1}^n b_{kl_k} e_{l_k}\right)\right]\\
      &=\tr\left[\sum_{1\leq j_1,\dots,j_k\leq n}\sum_{1\leq l_1,\dots,l_k\leq n} a_{1j_1}b_{1l_1}\cdots a_{kj_k}b_{kl_k} e_{j_1}\otimes e_{l_1}\otimes\cdots\otimes e_{j_k}\otimes e_{l_k}\right]\\
      &=\sum_{1\leq j_1,\dots,j_k\leq n}\sum_{1\leq l_1,\dots,l_k\leq n}\left(a_{1j_1}b_{1l_1}\cdots a_{kj_k}b_{kl_k}\right)\cdot\left(\delta_{j_1l_1}\cdots\delta_{j_kl_k}\right)\\
      &=\sum_{1\leq j_1,\dots,j_k\leq n}a_{1j_1}b_{1j_1}\cdots a_{kj_k}b_{kj_k}\\
      &=\prod_{i=1}^n \langle u_i,v_i\rangle.
    \end{align*}
\end{proof}

\begin{remark}\label{remark: on the trace}

 In view of the above lemma, we see that this trace operator is nothing but a pairwise contracting operator (or the evaluation map in some literature). More precisely, it is the following composition
\[\mathfrak g^{\otimes 2k}\to (\mathfrak g\otimes \mathfrak g^*)^{\otimes k}\to \mathbb R,\]
where the first map is the canonical linear isomorphism induced by $\mathfrak g\cong \mathfrak g^*$ (where $\fg^*$ is the dual space of $\fg$) that is determined by the inner product, and the second map is simply the product of the $k$-pairs of contractions. More precisely, 
\begin{itemize}
    \item When $k=1$, we treat each vector $A = u_1 \otimes v_1 \in \fg^{\otimes 2}$ as $A = u_1 \otimes v_1^* \in \fg \otimes \fg^*$ with $v_1^* \in \fg^*$ being the dual on $\fg$ given by $v_1^*(w) = \langle v_1, w \rangle_G$. In many literature on differential geometry, see e.g. \cite{RiemannianManifoldsLee}, the vector $u_1 \otimes v_1 \in \fg^{\otimes 2}$ is called a $(2,0)$-tensor and $u_1 \otimes v_1^* \in \fg \otimes \fg^*$ is called a $(1,1)$-tensor. This $(1,1)$-tensor $u_1 \otimes v_1^*$ can be viewed as an endomorphsim on $\fg$ through the rule $u_1 \otimes v_1^*(w) = v_1^*(w)u_1$. In this case the term $\tr(A) = \tr(u_1 \otimes v_1^*)$ is exactly the classical trace of this endomorphsim $w \mapsto v_1^*(w)u_1$.
    \item For a general integer $k \ge 1$, we treat $A = u_1\otimes v_1\otimes \cdots\otimes u_k\otimes v_k \in \mathfrak g^{\otimes 2k}$ as $A = u_1 \otimes v_1^* \otimes u_2 \otimes v_2^* \otimes \ldots \otimes u_k \otimes v_k^* \in (\mathfrak g\otimes \mathfrak g^*)^{\otimes k}$, and the vector $A = u_1 \otimes v_1^* \otimes u_2 \otimes v_2^* \otimes \ldots \otimes u_k \otimes v_k^*$ is called a $(k,k)$-tensor. By taking the trace (also called the contraction) for the first pair $u_1$ and $v_1^*$ while keeping $u_2,v_2^*,\ldots,u_k,v_k^*$ unchanged, we can obtain a $(k-1,k-1)$-tensor which is denoted by $\tr_{1}(A)$, see \cite[pp. 13, pp.28--29]{RiemannianManifoldsLee} for the definition of such trace/contraction operation for general $(k,l)$-tensors. Then, by taking the trace/contraction to $\tr_{1}(A)$ for the first pair $u_2$ and $v_2^*$ in $\tr_{1}(A)$, we can get a $(k-2,k-2)$-tensor denoted by $\tr_2 \circ \tr_1 (A)$. Continuing this procedure, we finally arrive at
    $$
    \tr(A) = \tr_k \circ \tr_{k-1} \circ \ldots \circ \tr_1 (A), 
    $$
    which is a real number which is identified with a $(0,0)$-tensor.  
\end{itemize}
\end{remark}

\begin{remark}\label{remark: trace is well-defined}

It may seem ``unnatural'' from the definition since we made the choice of the linear isomorphism on the first map by dualizing all the even terms. However, the signature of the geodesic paths that we are considering actually lies in the subspace of the symmetric $2k$-tensor, and so does its average, see Remark \ref{remark: average signature is symmetric}. For symmetric $2k$-tensors, the choice of pairing and contraction will not change the resulting value. For instance, for a symmetric tensor $A = \sum_{m} u^{(m)}_1\otimes v^{(m)}_1\otimes u^{(m)}_2\otimes v^{(m)}_2 \in \mathfrak g^{\otimes 4}$ we can also view it as $A = \sum_{m} u^{(m)}_1\otimes v^{(m)}_1\otimes (u^{(m)}_2)^*\otimes (v^{(m)}_2)^* \in \fg^{\otimes 2} \otimes (\fg^*)^{\otimes 2}$ and define $\tr^\prime(A)$ by pairing $u^{(m)}_1$ with $(u^{(m)}_2)^*$ and pairing $v^{(m)}_1$ with $(v^{(m)}_2)^*$. Then one can easily check that $\tr(A) = \tr^\prime(A)$, where $\tr(A)$ is defined as in Definition \ref{def:tr} by pairing $u^{(m)}_1$ with $(v^{(m)}_1)^*$ and pairing $u^{(m)}_2$ with $(v^{(m)}_2)^*$.
In this sense, the trace we define here is indeed a ``natural'' invariant associated to the average signature.

\end{remark}

Since $\mathbb A(G)$ only supports on even degree tensors (see Proposition \ref{prop:even-degree}), we can define its trace by only focusing on those terms with even degree.

\begin{definition}\label{def:tr}
 Let $G$ be a compact Lie group with a choice of a bi-invariant metric. The trace (spectrum) of the average signature is given by
    \[\tr(\mathbb A(G)) := (\tr (A_{k}))_{k=0}^\infty =(1,0,\tr A_2,0,\tr A_4,\cdots)\in \mathbb R^{\infty}.\]
\end{definition}

As we will see later, it is more convenient for us to work with the following rescaled trace spectrum.

\begin{definition}\label{def:rtr}
 We denote the rescaled trace (spectrum) of the average signature by  
 \[\rtr(\mathbb A(G))= (\rtr(A_{k}))_{k=0}^\infty =(1,0,\rtr(A_2),0,\rtr(A_4),\cdots)\in \mathbb R^{\infty},\]
 where
 \[\rtr (A_k)=(k!)\tr (A_k).\]
\end{definition}

 We emphasize again that the (rescaled) trace of the average signature, as opposed to the average signature itself, depends on the choice of the bi-invariant metric on $G$. (See Example \ref{ex:trace} below.) We also note that, for any simple compact Lie group $G$, there is a unique bi-invariant metric on $G$ upto a scalar multiple.

\begin{example}\label{ex:trace} Following from the computations in Example \ref{ex:circle}, \ref{ex:3sphere} and \ref{ex:torus}, we have

\begin{enumerate}
    \item[$\bullet$] $G=S^1$ with the bi-invariant metric $\langle e_1,e_1\rangle=\lambda^2$ for $\lambda>0$:
\[\tr(A_{2k}) =\frac{\lambda^{2k}\pi^{2k}}{(2k+1)!},\quad \rtr (A_{2k})=\frac{\lambda^{2k}\pi^{2k}}{2k+1}.\]
\item[$\bullet$] $G=\SU(2)$ with the bi-invariant metric $\langle e_i,e_j\rangle:=\frac{\lambda^2}{2}\tr(e_i\cdot e_j^*)$ for $\lambda>0$:
\[\tr(A_{2k}) =\frac{2\lambda^{2k}\cdot I_{2k}}{\pi(2k)!},\quad \rtr (A_{2k})=\frac{2\lambda^{2k}\cdot I_{2k}}{\pi}.\]
where $I_{2k}=\int_0^\pi (r^{2k}\sin^2 r) dr$ satisfies the recursive relations $I_0=\frac{\pi}{2}$ and $I_{2k}=\frac{k(2k-1)}{2}I_{2k-2}+\frac{\pi^{2k+1}}{2(2k+1)}$.
\item[$\bullet$] $G=S^1\times S^1$ with the family of bi-invariant metrics $\langle e_1,e_1\rangle=\lambda_1^2$, $\langle e_1,e_2\rangle=0$, and $\langle e_2,e_2\rangle=\lambda_2^2$, where $\lambda_1,\lambda_2>0$ are two positive real numbers:
\begin{align*}
    \tr(A_{2k})&=\frac{\pi^{2k}}{(2k)!}\sum_{\substack{\mathcal I=(i_1,\cdots,i_{2k})\in\{1,2\}^{2k}\\ k(\mathcal I),\ell(\mathcal I) \textrm{ are even}}}\frac{1}{(k(\mathcal I)+1)(\ell(\mathcal I)+1)}\tr(e_{i_1}\otimes\cdots\otimes e_{i_{2k}}).\\
    &=\frac{\pi^{2k}}{(2k)!}\sum_{\substack{i,j\geq 0\\i+j=k}}\sum_{\substack{\mathcal I=(i_1,i_1,\cdots,i_{k},i_k)\in\{1,2\}^{2k}\\k(\mathcal I)=2i,\;\ell(\mathcal I)=2j}}\frac{\lambda_1^{2i}\lambda_2^{2j}}{(2i+1)(2j+1)}\\
    &=\frac{\pi^{2k}}{(2k)!}\sum_{\substack{i,j\geq 0\\i+j=k}} {k\choose{i}}\frac{\lambda_1^{2i}\lambda_2^{2j}}{(2i+1)(2j+1)},
\end{align*}
where the second equality uses the fact that the trace is nonzero only when the multi-index is of the form $\mathcal I=(i_1,i_1,\cdots,i_{k},i_k)$, and the last equality simply counts the total number of this form with exactly $i$-pairs of $1s$ and $j$-pairs of $2s$ appearing in $\mathcal I$. From the expression, we see explicitly that the trace depends on the choice of bi-invariant metric on $G$.
\

\end{enumerate}   
\end{example}

We are interested to know whether/how the (rescaled) trace of the average signature is able to recover the geometry of $G$, optimally in an explicit way. Now we fix a bi-invariant metric on $G$. In particular, $G$ is now a Riemannian manifold. Moreover, recall that $\mu$ denotes the volume form on $G$ induced by this bi-invariant metric, which is a Haar measure. As before, for the Riemannian volume form $\mu$ on $G$ induced by the given bi-invariant Riemannian metric on $G$, we denote $\bar \mu = \frac{\mu}{\mu(G)}$ the unique Haar probability measure on $G$, also called the relative volume form. Note that the relative volume can be used to define a certain averaged amount of Ricci curvature which in turn provides a criterion for the (pre)compactness of Riemannian manifolds in the Gromov-Hausdorff topology, see \cite{RelativeVolume1997}.

\begin{theorem}\label{thm:Lk-norm}
    Let $f:G\rightarrow \mathbb R_{\geq 0}$ be the smooth function given by $f(g)=d(e,g)^2$ where $d$ is the Riemannian distance function on $G$. Then for any $k\in \mathbb N^+$, the $L^k$-norm of $f$ (with respect to the Haar probability measure $\bar \mu$) can be recovered from $\rtr (\mathbb A(G))$. More preciesly, we have
    \[||f||_{L^k}=\left(\rtr(A_{2k})\right)^{1/k}.\]
\end{theorem}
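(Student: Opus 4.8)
The plan is to reduce everything to the explicit formula $S_k(\gamma_g) = v(g)^{\otimes k}/k!$ established in the proof of Proposition \ref{prop:even-degree}, and then apply the trace. First I would recall that for $\mu$-almost every $g \in G$ (i.e. outside the cut locus of $e$), writing $v(g) = \exp^{-1}(g) \in \fg$ for the unique preimage of minimal norm, the shortest geodesic $\gamma_g$ from $e$ to $g$ is $\gamma_g(t) = \exp(tv(g))$, so that $\bbS(\gamma_g) = \exp_\otimes(v(g))$ and hence $S_{2k}(\gamma_g) = v(g)^{\otimes 2k}/(2k)!$. Crucially, since $\gamma_g$ is a constant-speed minimizing geodesic on $[0,1]$, its length equals $\|v(g)\|_{\fg} = d(e,g)$, so $\|v(g)\|_{\fg}^2 = f(g)$.

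Next I would compute $\tr(v^{\otimes 2k})$ for a fixed vector $v \in \fg$. By Lemma \ref{lem:tr}, applied with $u_i = v_i = v$ for all $i = 1, \dots, k$, we get $\tr(v^{\otimes 2k}) = \prod_{i=1}^{k}\langle v, v\rangle_{\fg} = \langle v, v\rangle_{\fg}^{k} = \|v\|_{\fg}^{2k}$. (Here one should note that $v^{\otimes 2k}$ is of the special form $u_1 \otimes v_1 \otimes \cdots \otimes u_k \otimes v_k$ with all factors equal to $v$, so the lemma applies directly.) Therefore $\tr(S_{2k}(\gamma_g)) = \|v(g)\|_{\fg}^{2k}/(2k)! = f(g)^{k}/(2k)!$ for a.e. $g$.

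Then I would move the trace inside the Bochner integral: since $\tr$ is a bounded (in fact, since it is a fixed linear functional on the finite-dimensional space $\fg^{\otimes 2k}$, automatically continuous) linear functional, it commutes with the $\cH$-valued integral defining $A_{2k} = \frac{1}{\mu(G)}\int_G S_{2k}(\gamma_g)\, d\mu(g)$. Hence
\[
\tr(A_{2k}) = \frac{1}{\mu(G)}\int_G \tr\big(S_{2k}(\gamma_g)\big)\, d\mu(g) = \frac{1}{(2k)!}\cdot\frac{1}{\mu(G)}\int_G f(g)^{k}\, d\mu(g) = \frac{1}{(2k)!}\int_G f(g)^k\, d\overline\mu(g),
\]
where $\overline\mu$ is the normalized Haar measure. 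Multiplying by $(2k)!$ gives $\rtr(A_{2k}) = \int_G f^k\, d\overline\mu = \|f\|_{L^k}^{k}$, where the last equality uses that $f \ge 0$ so no absolute value is needed; taking $k$-th roots yields $\|f\|_{L^k} = (\rtr(A_{2k}))^{1/k}$.

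The main obstacle, and the only genuinely non-routine point, is justifying the interchange of the trace with the Bochner integral and, slightly more delicately, confirming that the a.e.-defined integrand $g \mapsto v(g)^{\otimes 2k}/(2k)!$ is exactly the one integrated in the definition of $A_{2k}$ — that is, that the measurable geodesic selector $\GeodSel$ coincides $\mu$-a.e. with $g \mapsto (t \mapsto \exp(tv(g)))$. This is precisely the content already extracted in the proof of Proposition \ref{prop:even-degree} (cut locus has measure zero, $\exp$ restricted to the injectivity domain is a diffeomorphism onto a full-measure set), so I would simply cite that computation; the trace-integral interchange is then immediate from boundedness of $\tr$ on the finite-dimensional target together with Proposition \ref{prop: boundedness of signature}, which guarantees the integrand is bounded and hence integrable.
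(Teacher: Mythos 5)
Your proposal is correct and follows essentially the same route as the paper's own proof: reduce to $S_{2k}(\gamma_g)=v(g)^{\otimes 2k}/(2k)!$ from the proof of Proposition \ref{prop:even-degree}, apply Lemma \ref{lem:tr} to get $\tr(S_{2k}(\gamma_g))=d(e,g)^{2k}/(2k)!$, and commute the trace with the integral by linearity. Your extra remarks on the a.e.\ identification of the geodesic selector and the continuity of $\tr$ on the finite-dimensional target are sound and only make explicit what the paper leaves implicit.
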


\begin{proof}
According to the definition and the linearity of trace, we have
\[\tr(A_{2k}) =\tr\left(\frac{1}{\mu(G)}\int_G S_{2k}(\gamma_g)d\mu(g)\right)=\frac{1}{\mu(G)}\int_G \tr (S_{2k})(\gamma_g)d\mu(g).\]
For almost every $g\in G$, we have $S_{2k}(\gamma_g)=\frac{v(g)^{\otimes {2k}}}{(2k)!}$ where $v(g)\in \mathfrak g$ is the unique vector satisfying $\exp(v(g))=g$. (See \eqref{eq:sig-simple}.) By taking the trace and use Lemma \ref{lem:tr}, we obtain
\[\tr (S_{2k}(\gamma_g))=\frac{||v(g)||^{2k}}{(2k)!}=\frac{d(e,g)^{2k}}{(2k)!}.\]
Hence by taking the integral over the normalized Haar measure, we have
\begin{align*}
    ||f||_{L^k}&=\left(\frac{1}{\mu(G)}\int_G d(e,g)^{2k}d\mu(g)\right)^{1/k}\\
    &=\left((2k)!\tr (A_{2k})\right)^{1/k}\\
    &=(\rtr (A_{2k}))^{1/k}.
\end{align*}

\end{proof}

\begin{corollary}\label{cor:diam}
    The diameter of $G$ can be recovered from $\tr\mathbb A(G)$. More precisely, we have
    \[\diam(G)=\lim_{k\rightarrow \infty}\left(\rtr (A_{2k})\right)^{1/2k}.\]
\end{corollary}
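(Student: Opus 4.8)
The plan is to recover the diameter as the limit of $\bigl(\rtr(A_{2k})\bigr)^{1/2k}$ by combining Theorem \ref{thm:Lk-norm} with the standard fact that $L^k$-norms converge to the $L^\infty$-norm on a finite measure space. First I would invoke Theorem \ref{thm:Lk-norm} to write $\bigl(\rtr(A_{2k})\bigr)^{1/2k} = \|f\|_{L^k}^{1/2} = \bigl(\|f\|_{L^k}\bigr)^{1/2}$, where $f(g) = d(e,g)^2$ and the $L^k$-norm is taken with respect to the normalized Haar measure $\overline\mu$ (a probability measure on the compact group $G$). So it suffices to show $\lim_{k\to\infty}\|f\|_{L^k} = \|f\|_{L^\infty}$ and then identify $\|f\|_{L^\infty}$ with $\diam(G)^2$.

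The convergence $\|f\|_{L^k}\to\|f\|_{L^\infty}$ as $k\to\infty$ is the classical fact that on a probability space the $L^k$-norms are nondecreasing in $k$ and converge to the essential supremum; here $f$ is continuous (indeed smooth, as noted in the statement of Theorem \ref{thm:Lk-norm}) and $G$ is compact, so $f$ is bounded and $\|f\|_{L^k} \le \|f\|_{L^\infty} < \infty$ for all $k$, while for any $\varepsilon > 0$ the set $\{g : f(g) > \|f\|_{L^\infty} - \varepsilon\}$ has positive $\overline\mu$-measure (using continuity of $f$ and the fact that $\overline\mu$ assigns positive mass to every nonempty open set), which forces $\liminf_k \|f\|_{L^k} \ge \|f\|_{L^\infty} - \varepsilon$. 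Letting $\varepsilon \to 0$ gives the limit.

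Next I would identify $\|f\|_{L^\infty}$ with $\diam(G)^2$. By continuity of $f$ and compactness of $G$, $\|f\|_{L^\infty} = \max_{g\in G} d(e,g)^2$. By left-invariance of the metric $d$ (which follows from bi-invariance of $\langle\cdot,\cdot\rangle_G$), $d(g,h) = d(e, g^{-1}h)$ for all $g,h$, so $\sup_{g,h} d(g,h) = \sup_{g\in G} d(e,g)$, i.e. $\diam(G) = \max_{g\in G} d(e,g)$. Hence $\|f\|_{L^\infty} = \diam(G)^2$, and therefore
\[
\lim_{k\to\infty}\bigl(\rtr(A_{2k})\bigr)^{1/2k} = \lim_{k\to\infty}\|f\|_{L^k}^{1/2} = \|f\|_{L^\infty}^{1/2} = \diam(G),
\]
which is the claimed formula. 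Since $\rtr(A_{2k})$ is an explicit function of $\tr(\mathbb A(G))$ via Definition \ref{def:rtr}, this shows $\diam(G)$ is recovered from $\tr\mathbb A(G)$.

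I do not anticipate a serious obstacle here: the only subtlety is making sure the probability-space hypothesis is in force (hence the switch to the normalized Haar measure $\overline\mu$, under which the $L^k$-norms are monotone and the $L^\infty$-limit statement holds cleanly) and that $f$ being merely continuous rather than, say, simple is enough — which it is, because positivity of Haar measure on open sets gives the lower bound. If one wanted to be slightly more careful, one could also remark that the cut locus of $e$ has $\overline\mu$-measure zero and so does not affect any of the integrals or suprema in play, but this is not strictly needed since $d(e,\cdot)$ is globally continuous on all of $G$.
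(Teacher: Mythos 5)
Your proposal is correct and follows exactly the paper's own argument: apply Theorem \ref{thm:Lk-norm}, use the standard fact that $\|f\|_{L^k}\to\|f\|_{L^\infty}$ on a probability space, and identify $\|f\|_{L^\infty}=\diam(G)^2$ via homogeneity of the bi-invariant metric. The extra details you supply (monotonicity of $L^k$-norms, positivity of Haar measure on open sets) are sound elaborations of what the paper leaves implicit.
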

\begin{proof}
    We observe that $\diam(G)^2$ is the $L^\infty$-norm of $f(g)=d(e,g)^2$, hence the corollary follows immediately from the theorem together with the fact that $||f||_{L^\infty}=\lim_{k\rightarrow \infty}||f||_{L^k}$.
\end{proof}

\begin{theorem}\label{thm:metric-ball}
    The $\bar \mu$-measure of metric balls can be recovered from $\rtr\mathbb (A(G))$, that is, the function $F:[0, \diam(G)] \rightarrow \mathbb R_{\geq 0}$ given by
    \[F(R):= \bar \mu(B(R)) = \frac{\mu(B(R))}{\mu(G)}=\frac{\mu(\{g\in G:d(e,g)\leq R\})}{\mu (G)}\]
    can be recovered by $\rtr(\mathbb A(G))$.
\end{theorem}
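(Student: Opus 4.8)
The key observation is that Theorem~\ref{thm:Lk-norm} already delivers all the moments $\int_G d(e,g)^{2k}\,d\overline\mu(g) = \rtr(A_{2k})$ of the non-negative random variable $X := d(e,g)^2$, where $g$ is distributed according to the normalized Haar measure $\overline\mu$ on $G$. The function $F$ we want to recover is, by its very definition, related to the cumulative distribution function of $X$: indeed
\[
F(R) = \overline\mu(\{g : d(e,g) \le R\}) = \overline\mu(\{g : X(g) \le R^2\}) = \mathbb{P}(X \le R^2).
\]
So the statement reduces to the classical fact that a compactly supported probability measure on $\mathbb{R}_{\ge 0}$ is uniquely determined by (and can be reconstructed from) its moment sequence — the Hausdorff moment problem. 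The plan is therefore: (i) identify $F(R) = G_X(R^2)$ where $G_X$ is the CDF of the pushforward measure $\nu := X_*\overline\mu$ on $[0,\diam(G)^2]$; (ii) observe that $\nu$ is a Borel probability measure supported on the compact interval $[0,\diam(G)^2]$, and that $\diam(G)^2$ itself is already recoverable from $\rtr(\mathbb A(G))$ by Corollary~\ref{cor:diam}; (iii) invoke the determinacy of the Hausdorff moment problem to conclude $\nu$, hence $F$, is determined by $(\rtr(A_{2k}))_{k\ge 0} = (\int s^k\,d\nu(s))_{k\ge 0}$; (iv) for the explicit reconstruction, exhibit a concrete inversion formula.

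For step (iv) I would present the standard Hausdorff inversion. After rescaling the interval to $[0,1]$ via $s = t/\diam(G)^2$ (legitimate since $\diam(G)$ is known), the moments $m_k := \int_0^1 t^k\,d\tilde\nu(t)$ are $\rtr(A_{2k})/\diam(G)^{2k}$, and the CDF can be recovered at continuity points through the classical limit
\[
\tilde G(u) = \lim_{N\to\infty} \sum_{k \le Nu} \binom{N}{k} \sum_{j=0}^{N-k} (-1)^j \binom{N-k}{j} m_{k+j},
\]
or, more smoothly, via the Bernstein-polynomial approximation $\int_0^u d\tilde\nu \approx \sum_{k} \binom{N}{k}(\Delta^{N-k}m)_k$ with finite-difference operator $\Delta$. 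Either way one obtains $F(R) = \tilde G\!\left(R^2/\diam(G)^2\right)$ at every $R$ for which $R^2$ is a continuity point of $\nu$; monotonicity of $F$ then pins down the (at most countably many) jump points by left/right limits, so $F$ is recovered everywhere on $[0,\diam(G)]$.

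Alternatively, and perhaps more in the spirit of the paper, one can avoid quoting the moment problem and argue by Stone–Weierstrass directly: for any continuous $h$ on $[0,\diam(G)^2]$, approximate $h$ uniformly by polynomials $p_N$, so that $\int h\,d\nu = \lim_N \int p_N\,d\nu = \lim_N p_N(\rtr(A_{\bullet}))$ is determined; applying this to continuous approximants of the indicator $\mathbf{1}_{[0,R^2]}$ (squeezed from above and below) and using that $\nu$ has no atom at a generic $R^2$ recovers $F(R)$. I would likely include the Stone–Weierstrass version as the main argument and remark that it is the Hausdorff moment problem in disguise.

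\medskip

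\noindent\textbf{Main obstacle.} The only genuine subtlety is handling the points where $\nu$ has an atom, i.e.\ where $F$ is discontinuous. Since $f(g) = d(e,g)^2$ is a genuine smooth function on a manifold, its level sets $\{f = c\}$ generically have measure zero, so $\nu$ is typically atomless and $F$ is continuous; but one should either prove this (e.g.\ via Sard's theorem, noting the critical values of $f$ form a measure-zero set, combined with the coarea formula to control the regular level sets) or simply state the reconstruction holds at all continuity points and extend by monotonicity/one-sided limits — the latter is cleaner and suffices for the theorem as stated. A secondary, purely bookkeeping point is making sure the rescaling by $\diam(G)^2$ is justified, which it is, precisely because Corollary~\ref{cor:diam} recovers $\diam(G)$ from the same data $\rtr(\mathbb A(G))$. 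I do not expect either of these to be hard; the heart of the proof is the one-line identification $F(R) = \mathbb{P}(d(e,g)^2 \le R^2)$ together with Theorem~\ref{thm:Lk-norm}.
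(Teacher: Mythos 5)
Your proposal is correct and follows essentially the same route as the paper: both identify $\rtr(A_{2k})$ as the $k$-th moment of the pushforward measure $\nu = f_*\overline{\mu}$ on $[0,\diam(G)^2]$ via Theorem~\ref{thm:Lk-norm}, and both recover $F(R)=\nu([0,R^2])$ by approximating the indicator $\mathbf{1}_{[0,R^2]}$ with polynomials (the paper uses mollification followed by Bernstein polynomials, i.e.\ exactly your Stone--Weierstrass/Hausdorff-moment argument). Your extra care about possible atoms of $\nu$ and recovery at discontinuity points by monotone one-sided limits is a point the paper glosses over, but it does not change the substance of the argument.
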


\begin{proof}
   Let $D=\diam(G)^2$ and $\overline \mu=\frac{\mu}{\mu(G)}$ be the unique normalized Haar measure on $G$. We now consider the push forward measure $\nu:=f_*\overline \mu$, where $f(g) = d(e,g)^2$ as in Theorem \ref{thm:Lk-norm}. It is supported on $[0,D]$ and is absolutely continuous with respect to the Lebesgue measure. By Theorem \ref{thm:Lk-norm} it follows that
   \[\rtr (A_{2k}) =\int_G f(g)^k d\overline \mu(g)=\int_{[0,D]} x^k d\nu(x).\]
  for every $k\in \mathbb N$. Furthermore, for every $R \le \diam (G)$, the $\nu$-integration of the indicator function $1_{\{[0,R^2]\}}$ for the interval $[0,R^2]$  can be approximated by the $\nu$-integration of  a sequence of 
 polynomials $(p_\ell(x))_{\ell \in \mathbb{N}}$. In fact, one can find the coefficients $a^\ell_{N(\ell)}, \ldots, a^\ell_1, a^\ell_0$ in these polynomials $p_\ell(x) = a^\ell_{N(\ell)}x^{N(\ell)} + \ldots + a^\ell_1x + a^\ell_0$ explicitly (for instance, first performing convolution to $1_{\{[0,R^2]\}}$ against a smooth mollifier to obtain a sequence of $[0,1]$-valued continuous functions $(\mathbf{f}_\ell)_{\ell \ge 1}$ defined on $[0,D]$ which approximates $1_{\{[0,R^2]\}}$ pointwise, then by the dominated convergence theorem one has $ \int 1_{\{[0,R^2]\}} d\nu = \lim_{\ell \to \infty} \int \mathbf{f}_\ell d\nu$; next, for each $\ell$, picking a Bernstein polynomial $p_\ell$ for $\mathbf{f}_\ell$ such that $\|\mathbf{f}_\ell - p_\ell\|_\infty < 1/\ell$). All above implies that
 $$
 \nu([0,R^2]) = \lim_{\ell \to \infty} \int p_\ell(x) d\nu(x) = \lim_{\ell \to \infty} \sum_{k=0}^{N(\ell)}a^\ell_k \int x^k d\nu(x) = \lim_{\ell \to \infty} \sum_{k=0}^{N(\ell)}a^\ell_k \rtr (A_{2k}),
 $$
 i.e., $\nu([0,R^2])=F(R)$ can be recovered by $\rtr (\mathbb A(G))$.
\end{proof}

\begin{remark}
Note that the diameter of $G$ can also be recovered from the above relative volume function since
\[\diam(G)=\inf\{t\ge 0: F(t)=1\}.\]
\end{remark}

\begin{remark}\label{remark: moment problem}
 Recall that the average signature $\mathbb A(G)$ satisfies that
 $$
 \mathbb A(G) = \sum_{k=0}^\infty \int_G \frac{v(g)^{\otimes k}}{k!}d\bar \mu(g)
 $$
 is exactly the moment generating function of the random variable $v(\cdot)$ (which is the ``inverse'' of the exponential map on $G$) with respect to $\bar \mu$, see \eqref{eq: expression of Ak}. Therefore, what we are doing in the proof of Theorem \ref{thm:metric-ball}, which showed that the Haar probability measure $\bar \mu$ is uniquely determined by $\bar \tr(\mathbb A(G))$, can be viewed as solving the (Hausdorff) moment problem for $\bar \mu$ using the Stone-Weierstrass approximation theorem, see \cite{MomentProblem} for a comprehensive introduction to the moment problem, and see \cite{Chevyrev2022signaturekernel} and \cite{Chevyrev2016chf} for the moment problem of path-valued random variable.
\end{remark}

\begin{corollary}\label{cor:dim-vol-scalar} Let $G$ be a (connected) compact Lie group with a bi-invariant Riemannian metric. Given the rescaled trace spectrum of the average signature $\rtr(\mathbb A(G))\in \mathbb R^\infty$, then we can recover
\begin{enumerate}
    \item[$\bullet$] the dimension of $G$,
    \item[$\bullet$] the volume of $G$,
    \item[$\bullet$] the scalar curvature of $G$.
\end{enumerate}
    
\end{corollary}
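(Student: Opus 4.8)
The plan is to combine Theorem~\ref{thm:metric-ball} with the classical asymptotic expansion of the volume of small geodesic balls. By Theorem~\ref{thm:metric-ball} the relative-volume function $F(R)=\mu(B(R))/\mu(G)$ on $[0,\diam(G)]$ is determined by $\rtr(\mathbb A(G))$, so it suffices to extract $\dim G$, $\vol(G)$ and the scalar curvature of $G$ from the germ of $F$ at $R=0$. Two preliminary remarks: since $G$ with a bi-invariant metric is homogeneous, its scalar curvature $S$ is a constant, so ``the scalar curvature of $G$'' is well defined; and since $G$ is compact it has positive injectivity radius, so for all sufficiently small $R$ the ball $B(e,R)=\exp_e\{v\in\fg:|v|<R\}$ contains no cut points of $e$ and $F$ coincides near $0$ with the smooth geodesic-ball-volume function. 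Writing $n=\dim G$ and $\omega_n=\pi^{n/2}/\Gamma(\tfrac n2+1)$ for the volume of the unit ball in $\R^n$, the standard expansion of the volume element in normal coordinates, averaged over directions (the classical small geodesic ball volume expansion, due to Gray), gives
\[
\mu(B(e,R))=\omega_n R^n\Bigl(1-\tfrac{S}{6(n+2)}R^2+O(R^4)\Bigr),\qquad R\to 0^+,
\]
and hence $F(R)=\dfrac{\omega_n}{\mu(G)}R^n\bigl(1-\tfrac{S}{6(n+2)}R^2+O(R^4)\bigr)$ as $R\to 0^+$.

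From this expansion the three invariants are read off one after another. First, taking logarithms, $\dim G=n=\lim_{R\to0^+}\log F(R)/\log R$. Second, once $n$ is known the constant $\omega_n$ is explicit, and $\vol(G)=\mu(G)=\omega_n\lim_{R\to0^+}R^n/F(R)$. Third, with $n$ and $\mu(G)$ in hand, the next-order term of $F$ yields the scalar curvature:
\[
S=6(n+2)\lim_{R\to0^+}\frac{1}{R^2}\Bigl(1-\frac{\mu(G)}{\omega_n}\cdot\frac{F(R)}{R^n}\Bigr).
\]
Since each of these three limits is a function of $F$ alone, and $F$ is determined by $\rtr(\mathbb A(G))$ by Theorem~\ref{thm:metric-ball}, all three are determined by $\rtr(\mathbb A(G))$, which proves the corollary.

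There is no serious analytic obstacle beyond Theorem~\ref{thm:metric-ball}; the points that need care are: (i) invoking the small-ball volume expansion and justifying that the recovered $F$ really has this form near $0$ --- this uses only positivity of the injectivity radius, so that no cut-locus contribution enters for small $R$, together with the vanishing of the $R^3$ term after averaging an odd function over the unit sphere in $\fg$, which leaves only the powers $R^n$ and $R^{n+2}$ that we use; and (ii) the bookkeeping above, namely that $n$ must be extracted first, then $\vol(G)$, then $S$, each step feeding the next, and that only the germ of $F$ at $0$ (not, e.g., the injectivity radius itself) is needed to run all three limits.
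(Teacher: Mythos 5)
Your argument is correct and follows essentially the same route as the paper: recover $F(R)$ from $\rtr(\mathbb A(G))$ via Theorem~\ref{thm:metric-ball}, then read off $n$, $\mu(G)$ and $S$ in that order from the small geodesic ball volume expansion $\mu(B(\epsilon))/(w_n\epsilon^n)=1-\tfrac{S}{6(n+2)}\epsilon^2+O(\epsilon^3)$, using the same three limits. Your extra remarks on the injectivity radius and homogeneity are sound and only make explicit what the paper leaves implicit.
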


\begin{proof}
We denote $w_n$ the volume of the $n$-dimensional unit ball in the standard Euclidean space. For any $n$-dimensional Riemannian manifold $M$, it is well-known that the metric ball $B(\epsilon)$ around a given point $p$ has the volume comparison (See \cite[Section 3.H.4]{GallotHulinLafontaine04})
\begin{equation}\label{eq:Taylor}
    \frac{\vol(B(\epsilon))}{w_n\epsilon^n}=1-\frac{S(p)}{6(n+2)}\epsilon^2+O(\epsilon^3),
\end{equation}
where $S(p)$ is the scalar curvature of $M$ at $p$ and $\vol(\cdot)$ is the Riemannian volume form on $M$. In our case, this Riemannian volume form induced by the bi-invariant metric is the Haar measure $\mu$. So for any measurable subset $A\subset G$, its volume is exactly given by $\vol(A)=\mu(A)$.

Since $F(\epsilon)$ can be recovered from $\rtr(\mathbb A(G))$ (by Theorem \ref{thm:metric-ball}), using the fact that 
\[1=\lim_{\epsilon\rightarrow 0^+}\frac{\mu(B(\epsilon))}{w_n\epsilon^n}=\lim_{\epsilon\rightarrow 0^+}\frac{F(\epsilon)\mu(G)}{w_n\epsilon^n}\]
we can also recover the dimension $n$ and the volume $\mu(G)$ from $\rtr(\mathbb{A}(G))$. More precisely, we have
\[n=\lim_{\epsilon\rightarrow 0^+}\frac{\log F(\epsilon)}{\log \epsilon},\]
and
\[\mu(G)=\lim_{\epsilon\rightarrow 0^+}\frac{w_n\epsilon^n}{F(\epsilon)}.\]
Furthermore, we note that by the left invariance of the metric, $G$ has constant scalar curvature. Then using the second order term in the above Taylor expansion \eqref{eq:Taylor}, we can recover the scalar curvature. More precisely, we have
\[S=\lim_{\epsilon\rightarrow 0^+}\frac{6(n+2)}{\epsilon^2}\left(1-\frac{F(\epsilon)\mu(G)}{w_n\epsilon^n}\right).\]
\end{proof}

\section{The product structure}\label{sec:product}
In Example \ref{ex:torus} and Example \ref{ex:trace}, we have already computed the average signature (and its trace) for the torus, which can be viewed as a Riemannian product of two circles. The main goal of this section is to generalize this phenomenon and analyze the average signature (and its trace) for a product of any two compact Lie groups, explicitly in terms of the average signatures (and their traces) of each factor.

First of all, we recall the shuffle product on the tensor algebra over a vector space. 
\begin{definition}
    Let $V$ be an $n$-dimensional vector space and $\{e_1,\dots,e_n\}$ be a basis of $V$. Given $e_{i_1}\otimes\cdots\otimes e_{i_k}\in V^{\otimes k}$ and $e_{j_1}\otimes\cdots\otimes e_{j_l}\in V^{\otimes l}$ with $i_1,\ldots,i_k, j_1,\ldots, j_l \in \{1,\ldots,n\}$, the shuffle product of the two elements is given by
    \[\left(e_{i_1}\otimes\cdots\otimes e_{i_k}\right)\shuffle \left(e_{j_1}\otimes\cdots\otimes e_{j_l}\right):=\sum_{\sigma} e_{\sigma(i_1)}\otimes\cdots\otimes e_{\sigma(i_k)}\otimes e_{\sigma(j_1)}\otimes\cdots\otimes e_{\sigma(j_l)},\]
    where the sum runs through all possible permutations $\sigma$ of the dummy index set (allow repeating) $\{i_1,\dots,i_k,j_1,\dots,j_l\}$ which preserves both the ordering of $\{i_1,\dots,i_k\}$ and $\{j_1,\dots,j_l\}$. The definition then naturally extends multi-linearly to the entire tensor algebra $T((V)) = \prod_{k=0}^\infty V^{\otimes k}$, which observes the commutativity and associativity laws.
\end{definition}

\begin{theorem}\label{thm:product}
    Let $G=G_1\times G_2$ be the Cartesian product of two (connected) compact Lie groups $G_1$ and $G_2$. Then for every $N \ge 0$,
    \[A_{N}(G)=\sum_{k=0}^N \frac{k!(N-k)!}{{N!}}\left(A_k(G_1)\shuffle A_{N-k}(G_2)\right),\]
    where $A_N(G)$ ($A_k(G_1), A_{N-k}(G_2)$ respectively) is the $N$-th ($k$-th, $(N-k)$-th) component of the average signature of $G$ ($G_1, G_2$) as in Definition \ref{def:average-signature}.
\end{theorem}

\begin{proof}
    We denote $\fg$ the Lie algebra of $G = G_1 \times G_2$, $\fg_1$ the Lie algebra of $G_1$ and $\fg_2$ the Lie algebra of $G_2$. It is well known that $\fg \cong \fg_1 \oplus \fg_2$.  For $i=1,2$ and $\mu_i$-almost all $g_i \in G_i$, we denote as usual $v(g_i) \in \fg_i$ the unique vector such that $\gamma_{g_i}(t) = \exp(tv(g_i))$ ($t \in [0,1]$) is the unique shortest constant speed geodesic connecting the neutral element $e_i \in G_i$ to $g_i$. 
    
    Recall that for a Riemannian product $G=G_1\times G_2$, a curve $\gamma(t)=(\gamma_1(t),\gamma_2(t))$ is a (constant speed) geodesic in $G$ if and only if both $\gamma_i(t)$ are constant speed geodesics in $G_i$. This is due to the fact that the Levi-Civita connection $\nabla$ on $G$ relates with those $\nabla^{(i)}$ on $G_i$ simply by $\nabla= \nabla^{(1)}+\nabla^{(2)}$. It follows that
    \[\nabla_{\gamma'(t)}\gamma'(t)=\left(\nabla^{(1)}_{\gamma_1'(t)}\gamma_1'(t),\nabla^{(2)}_{\gamma_2'(t)}\gamma_2'(t)\right)=0\]
    if and only if both $\nabla^{(i)}_{\gamma_i'(t)}\gamma_i'(t)=0$.
    
    Apply to our context, we know that the curve $\gamma_g(t)=\exp(t(v(g_1)+v(g_2))), t\in [0,1])$ is a geodesic segment connecting the neutral element $e$ to $g$ in $G$. We claim it is also unique length minimizing. Suppose $\eta_g(t),t\in[0,1]$ is another (constant speed) geodesic path which connects $e$ to $g$ in $G$, then by writing $\eta_g(t)=(\eta_1(t),\eta_2(t))$, we see that each $\eta_i$ is a (constant speed) geodesic path connecting $e$ to $g_i$ in $G_i$. But since $\gamma_{g_i}$ is unique length minimizing, we have $||\eta_i||_{G_i}> ||\gamma_{g_i}||_{G_i}=||v(g_i)||_{G_i}$. This shows that
    \[L(\eta_g)= \sqrt{||\eta_1'||_{G_1}^2+||\eta'_2||_{G_2}^2}>\sqrt{||v(g_1)||_{G_1}^2+||v(g_2)||_{G_2}^2}=L(\gamma_g),\]
    hence $\gamma_g$ must be the unique length minimizing geodesic connecting $e$ to $g$ in $G$. It follows that, for $\mu: = \mu_1 \times \mu_2$-almost all $g=(g_1,g_2)\in G = G_1\times G_2$, the unique vector $v(g)\in \mathfrak g$ such that $\gamma_g(t)=\exp(tv(g))$ $(t\in [0,1])$ is the length minimizing (constant speed) geodesic connecting the neutral element $e \in G$ to $g$ must satisfy $v(g) = v(g_1) + v(g_2)$ under the splitting $\mathfrak g\cong \mathfrak g_1\oplus \mathfrak g_2$.
    
    Let $\bar \mu$, $\bar \mu_1$ and $\bar \mu_2$ be the normalized probability Haar measure on $G, G_1, G_2$ and we have $\bar \mu = \bar \mu_1 \times \bar \mu_2$. Then by Fubini's theorem (noting that the shuffle product is bilinear) together with the definition of shuffle product, we have that for each $N \ge 0$,
    \begin{align*}
        N!A_N(G)&=\int_G v(g)^{\otimes N}d\bar \mu(g)\\
        &=\int_{G_1}\int_{G_2} (v(g_1)+v(g_2))^{\otimes N}d\bar \mu_1(g_1)d\bar \mu_2(g_2)\\
        &=\sum_{k=0}^N \int_{G_1}\int_{G_2} v(g_1)^{\otimes k}\shuffle v(g_2)^{\otimes {(N-k)}} d\bar \mu_1(g_1)d\bar \mu_2(g_2)\\
        &=\sum_{k=0}^N\left(\int_{G_1}v(g_1)^{\otimes k}d\bar \mu_1(g_1)\right)\shuffle\left(\int_{G_2}v(g_2)^{\otimes {(N-k)}}d\bar \mu_2(g_2)\right)\\
        &=\sum_{k=0}^N k!(N-k)!\left(A_k(G_1)\shuffle A_{N-k}(G_2)\right).
    \end{align*}
    Therefore the theorem holds.
\end{proof}


Note that for Theorem \ref{thm:product} we actually do not need the Riemannian structure (i.e., the choice of bi-invariant Riemannian metric) on the Lie groups, because the geodesics joining $e$ and $g$ for $g$ outside the cut locus of $e$ are always of the form $t \mapsto tv(g)$ for a unique $v(g) \in \fg$ regardless of the choice bi-invariant metric, 
see our discussions in Section \ref{sec:average-signature}. However, to compute the (rescaled) trace of the average signature on $G = G_1 \times G_2$, we have to fix bi-invariant metrics on $G_1$ and $G_2$, as the notion of trace operator (see Definitions \ref{def:tr}, \ref{def:rtr}) depends on the choice of bi-invariant metrics.

\begin{corollary}
    Let $G_1$ and $G_2$ be two connected, compact Lie groups endowed with the bi-invariant metrics $\langle \cdot, \cdot \rangle_{G_1}$ and $\langle \cdot, \cdot \rangle_{G_2}$, and let $G=G_1\times G_2$ be the Riemannian Cartesian product of $G_1$ and $G_2$, that is, its bi-invariant metric $\langle \cdot, \cdot \rangle_{G}$ is given by the product metric of $\langle \cdot, \cdot \rangle_{G_1}$ and $\langle \cdot, \cdot \rangle_{G_2}$. Then associated to these metrics, we have
    \[\rtr(A_{2N}(G))=\sum_{k=0}^N {N\choose k}\rtr(A_{2k})(G_1) \rtr (A_{2N-2k}(G_2))\]
    holds for all $N \in \mathbb N$.
\end{corollary}
\begin{proof}
The proof follows straightforward from Theorem \ref{thm:product} by taking the trace on both sides. The key ingredient is that
\[\left(e_{i_1}\otimes e_{i_1}\otimes\cdots \otimes e_{i_k}\otimes e_{i_k}\right)\shuffle \left(\overline e_{j_1}\otimes \overline e_{j_1}\otimes\cdots \otimes \overline e_{j_l}\otimes \overline e_{j_l}\right)\]
can only contribute to the trace for the total amount of ${k+l \choose k}$ shuffles which preserves each adjacent pairs $e_{i_p}\otimes e_{i_p}$ and $\overline e_{j_q}\otimes \overline e_{j_q}$. However, we present a more enlightening geometric proof here.

For each Lie group $G, G_1, G_2$, each bi-invariant metric will induce a volume form which gives a Haar measure on each Lie group. As usual, we denote $\bar \mu, \bar \mu_1, \bar \mu_2$ the normalized Haar measure on $G, G_1, G_2$ respectively, and we have $\bar \mu=\bar \mu_1\times \bar\mu_2$.  We also denote $\mathbb A(G_1)=\sum_{k=0}^{\infty} A_{2k}$, $\mathbb A(G_2)=\sum_{k=0}^{\infty} B_{2k}$ and $\mathbb A(G)=\sum_{k=0}^{\infty} C_{2k}$ the average signature of $G_1$, $G_2$ and $G$ respectively. Let $d_{G_1}$, $d_{G_2}$ and $d_G$ denote the Riemannian distance on $G_1$, $G_2$ and $G$ (relative to their equipped bi-invariant metrics) respectively. Then by Theorem \ref{thm:Lk-norm} and apply the binomial theorem and the Fubini's theorem, we have for each $N \in \mathbb N$
\begin{align*}
 \rtr (C_{2N}) &=\int_G d_G(e,g)^{2N} d\bar\mu(g)\\
 &=\int_{G_1}\int_{G_2} (d_{G_1}(e_1,g_1)^2+d_{G_2}(e_2,g_2)^2)^N d\bar \mu_1(g_1)d\bar \mu_2(g_2)\\
 &=\sum_{k=0}^N  {N\choose{k}}\left(\int_{G_1} d_{G_1}(e_1,g_1)^{2k}d\bar \mu_1(g_1)\right)\left(\int_{G_2} d_{G_2}(e_2,g_2)^{2N-2k}d\bar \mu_2(g_2)\right)\\
 &=\sum_{k=0}^N {N\choose k} \rtr(A_{2k}) \rtr (B_{2N-2k}).
\end{align*}
\end{proof}


\bibliographystyle{alpha}
\bibliography{myref}
\end{document}